\newtheorem{theo}{Theorem}
\newtheorem{lem}{Lemma}
\newtheorem{cor}{Corollary}
\newcommand{\PP}{\Bbb{P}}
\newcommand{\R}{\Bbb{R}}
\newcommand{\E}{\Bbb{E}}
\newcommand{\V}{\mbox{Var}}
\newcommand{\be}{\begin{equation}}
\newcommand{\ee}{\end{equation}}
\newcommand{\fc}{\mathcal{F}}
\numberwithin{theo}{section}
\numberwithin{equation}{section}
\numberwithin{propo}{section}
\numberwithin{lem}{section}
\numberwithin{cor}{section}
\begin{document}
\date{}
\title{LIL type behavior of multivariate L\'evy processes at zero}
\author[U. Einmahl]{ Uwe Einmahl}
\address{Department of Mathematics, Vrije Universiteit Brussel\\
Pleinlaan 2, B-1050 Brussels, Belgium}
\email{ueinmahl@vub.be}
\subjclass[2010]{60F15, 60G51}

\begin{abstract} We study the almost sure behavior of suitably normalized multivariate L\'evy processes as $t \downarrow 0.$ Among other results we find necessary and sufficient conditions for a  law of a very slowly varying function  which includes a general law of the iterated logarithm in this setting.  We also look at the corresponding cluster set problem.

\end{abstract}

\maketitle


\section{Introduction}
Let $\{X_t: t \ge 0\}$ be a $d$-dimensional L\'evy process with $X_0=0$ and characteristic triplet $(\gamma, \Sigma, \Pi)$, where $\gamma \in \R^d$ and $\Sigma$ is a symmetric, non-negative definite $d \times d$ matrix.  $\Pi$ is the L\'evy measure which is a measure defined on the $\sigma$-algebra of all $d$-dimensional Borel subsets of $\R^d$ satisfying $\Pi(\{0\})=0$  and
\be \label{1}
\int (1 \wedge |y|^2) \Pi(dy) < \infty,
\ee
where $| \cdot |$ will  always denote the Euclidean norm on $\R^d.$\\
Moreover, if we set $\overline{\Pi}(x) = \Pi\{y: |y|>x\}, x > 0,$ condition (\ref{1}) can be also written as
\be \label{2}
\int_0^1 \overline{\Pi}(\sqrt{t}) dt < \infty.
\ee
In this paper we are interested in the almost sure behavior of  suitably normalized  L\'evy processes as $t \downarrow 0.$ Our starting point is the following $d$-dimensional version of the law of the iterated logarithm for L\'evy processes at zero which states that with probability one,
$$\limsup_{t \downarrow 0} \frac{|X_t|}{\sqrt{2 t\log \log 1/t}} = \sigma,$$
where $\sigma^2$ is the largest eigenvalue of the matrix $\Sigma.$\\
This follows easily from the 1-dimensional case (see Proposition 47.11 in \cite{Sato}). To see that just write $(X_t)_{t \ge 0}$ as a sum of a Gaussian process and a jump process which is possible by the L\'evy-It\^o decomposition (see, for instance, Theorem 1 on p. 13 in \cite{Ber0}). Then applying the 1-dimensional result for the $d$ components of the jump process we see that this process is of almost sure order $o(\sqrt{t\log \log 1/t})$ as $t \downarrow 0$ and the above result follows from the LIL for $d$-dimensional Brownian motion.
So if $\Sigma$ is non-trivial the almost sure behavior of the L\'evy process is completely determined by its Gaussian part. \\
If we have a purely non-Gaussian L\'evy process, that is, if $\Sigma$ is the zero-matrix,  the above $\limsup$ is equal to 0 and it is natural to ask whether one can  find a different (and necessarily smaller) function $b(t), 0 \le t \le 1$ in this case such that with probability one,
\be \label{Frage}
0 < \limsup_{t \downarrow 0} \frac{|X_t|}{b(t)} < \infty.
\ee
We speak in this case  of LIL behavior.
This problem has been studied in dimension 1  and let us give a short summary of what is already known in this case: The first result is classical and it is  due to Khintchine (see Proposition 47.13 in \cite{Sato}). It states that for any positive, continuous and increasing function $g$ satisfying $g(t)/\sqrt{t\log\log 1/t} \to 0$ as $ t\to 0$ there exists a 1-dimensional L\'evy process such that with probability one,
$$\limsup_{t \downarrow 0} \frac{|X(t)|}{g(t)} = \infty.$$
This  shows that  the above function $b(t)$ provided that it exists can be arbitrarily close 
to $\sqrt{t\log\log 1/t}$.  Fristedt \cite{Fri} found an example where one has  for $\beta > 0$ with probability one,
$$\limsup_{t \downarrow 0}\frac{|X_t|}{\sqrt{t}{(\log\log 1/t)}^{(1-\beta)/2}} =\sqrt{2}.$$
Note that if we choose $\beta=1$ we get the normalizer $\sqrt{t}.$\\  Bertoin, Doney and Maller  \cite{Ber}  obtained a complete result in this case by finding a necessary and sufficient condition for a 1-dimensional L\'evy process to satisfy with probability one,
$$\limsup_{t \downarrow 0} \frac{|X_t|}{\sqrt{t}} = \lambda.$$
This condition is in terms of an integral test which also allows to determine the constant $\lambda$. The authors  give examples where $\lambda$ is finite and positive, but it is also possible that it is zero or infinity.   Later this result was extended  to a functional limit theorem (see \cite{Buch}).\\
The next  step was done in Savov \cite{Sav} where   the author found an extension of the integral test in \cite{Ber}  to more general  functions $b(t), 0 \le t \le 1$. He also provided a method for calculating a possible normalizing function $b$ in terms of the L\'evy measure $\Pi$ which is related to the well known LIL of Klass \cite{Kla} in the random walk case. As in this classical case, the $\limsup$ results for this general normalizing function require  extra integrability conditions  and consequently there are certain cases where the general normalizing sequence cannot be used and one has to rely on other methods for finding a suitable normalizing function. (See Proposition 3.1 in \cite{Sav} for an interesting example.)  Finally, Savov \cite{Sav} also indicated a possible link with the paper \cite{EL-1} where LIL  type results for the random walk in the infinite variance case are considered.  \\[.1cm]
Given  the work in \cite{EL-1, EL} it appears now very natural to ask
 whether and when one can find ``nice'' functions $b$  such that (\ref{Frage}) holds. In view of the results in \cite{EL-1} where among other things a ``law of a very slowly varying function" has been proven and the afore-mentioned results of \cite{Fri} and \cite{Ber} one could simply ask when one has with probability one,
$$0 < \limsup_{t \downarrow 0}h(1/t) \frac{|X_t|}{\sqrt{t \log \log 1/t} }<\infty,$$
 where $h: [0,\infty[ \to ]0,\infty[$ is non-decreasing and slowly varying at infinity. \\
 Another interesting question is finally whether one can establish analogous results in the $d$-dimensional case. Both questions will be addressed in the present paper.
\section{Statement of Main Results}
Unless otherwise indicated we assume from now on that $\{X_t: t \ge 0\}$ is a purely non-Gaussian $d$-dimensional L\'evy process with $X_0=0$. Thus,   the matrix $\Sigma$ in the characteristic triplet $(\gamma, \Sigma, \Pi)$ is equal to the zero-matrix. Furthermore by a standard argument we can ignore the ``big jumps''  (see, for instance, \cite{Ber, Sav}) so that  we can assume  that the L\'evy measure $\Pi$ is supported by the unit ball $D$ in $\R^d.$\\
Thus, $X_t$  has characteristic function $\theta \mapsto \E \exp(i\langle \theta, X_t \rangle) = \exp(t \Psi(\theta)),$ where
$$\Psi(\theta) = i \langle \gamma, \theta \rangle + \int_D \left(e^{i\langle \theta, y\rangle} - 1 - i\langle \theta, y\rangle\right)\Pi(dy), \theta \in \R^d.$$
Next we define a function $V(t), t \ge 0$ via the L\'evy measure $\Pi$ as follows,
$$V(t)=\sup_{|z| \le 1} \int_{|y| \le t} \langle y, z \rangle^2 \Pi(dy), t \ge 0. $$
Note that
$$V(t) \le \int_{|y| \le t} |y|^2 \Pi(dy), t > 0$$
Recalling (\ref{1}) we see  via the dominated convergence theorem that 
$$V(t) \searrow 0  \mbox{ as }t \downarrow 0.$$
To formulate our first results we still have to introduce some function classes.
As in \cite{EL-1} we denote the class of the continuous and non-decreasing slowly varying functions $h: [0,\infty[ \to [0,\infty[$ by $\mathcal{H}_1$ and we further look at subclasses $\mathcal{H}_q, 0 \le q <1,$
consisting of all functions $h$  satisfying the condition
$$h(xf_{\tau}(x))/h(x) \to 1 \mbox{ as } x \to \infty, 0 \le \tau < 1-q,$$
where $f_{\tau}(x):= \exp((\log x)^{\tau}), x \ge 1.$\\
We call the functions in $\mathcal{H}_0$ also  ``very slowly varying''. Examples for such functions are the functions $x \mapsto (\log \log x)^{\alpha}$ and $t \mapsto (\log x)^{\alpha}, x \ge e^e,$ where $\alpha > 0.$\\
Our first result gives an upper bound for  $\limsup_{t \downarrow 0} h(1/t) |X_t|/\sqrt{2t\log \log 1/t}$ if $h$ is slowly varying at infinity.
\begin{theo} \label{upper}
Let $\{X_t: t \ge 0\}$ be a purely non-Gaussian d-dimensional L\'evy process and suppose that $b(t)=\sqrt{t \log \log 1/t}/h(1/t)$ for small $t$, where $h \in \mathcal{H}_1$.    Assume that
\be \label{3}
\int_0^1 \overline{\Pi} (b(t))dt < \infty
\ee
and for some $\lambda \ge 0,$
\be \label{3a}
\limsup_{t \downarrow 0}{ V(b(t)) h^2(1/t)} \le \lambda^2/2.
\ee
 Then we have with probability one:
 $$\limsup_{t \downarrow 0} h(1/t) \frac{|X_t|}{\sqrt{t\log \log 1/t}} \le \lambda $$
\end{theo}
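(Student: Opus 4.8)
The plan is to reduce the almost sure statement to a Borel–Cantelli argument along a geometric sequence $t_n = \rho^n$ (for $\rho \in (0,1)$ close to $1$), combined with a truncation of the Lévy process that removes the large jumps at each scale and controls the remaining compensated small-jump martingale via an exponential inequality. The condition \eqref{3} is exactly what makes the large-jump part negligible: writing $X_t = X_t^{(\varepsilon)} + R_t^{(\varepsilon)}$, where $R_t^{(\varepsilon)}$ collects the jumps of modulus exceeding some level $\varepsilon = \varepsilon(t)$ comparable to $b(t)$, a standard computation shows that $\mathbb{P}(R_t^{(\varepsilon)} \neq 0 \text{ for some } t \in [t_{n+1}, t_n])$ is summable precisely because $\int_0^1 \overline{\Pi}(b(t))\,dt < \infty$ (this is the same device used in \cite{Ber, Sav}). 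Hence with probability one, for all large $n$ and all $t \in [t_{n+1}, t_n]$, $X_t$ coincides with the truncated process $X_t^{(\varepsilon)}$, which has bounded jumps.

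Next I would handle the truncated, compensated process. After truncating at level $\varepsilon_n \asymp b(t_n)$ and recentering, $X^{(\varepsilon_n)}$ is a martingale with jumps bounded by $\varepsilon_n$ whose predictable quadratic variation in any direction $z$, $|z| \le 1$, is controlled by $t\,V(\varepsilon_n) \le t_n V(b(t_n))(1+o(1))$ on $[t_{n+1},t_n]$. The key input is then a Fuk–Nagaev / Bennett-type exponential bound for the maximum of a $d$-dimensional martingale with bounded jumps: for a suitable $\kappa$,
$$
\mathbb{P}\Big( \sup_{t \le t_n} \big|X_t^{(\varepsilon_n)} - t\,(\text{drift})\big| \ge (\lambda + \delta)\sqrt{t_n \log\log(1/t_n)}/h(1/t_n) \Big) \le C \exp\!\Big(-\kappa \frac{(\lambda+\delta)^2 t_n \log\log(1/t_n)/h^2(1/t_n)}{t_n V(b(t_n))}\Big) + (\text{remainder}).
$$
By \eqref{3a}, $V(b(t_n)) h^2(1/t_n) \le (\lambda^2/2 + o(1))$, so the exponent is at least $-(1+o(1))\kappa (\lambda+\delta)^2 (\lambda^2/2)^{-1} \log\log(1/t_n)$; choosing the normalization in the exponential inequality so that $\kappa/(\lambda^2/2) \to 1$ (which is the content of getting the sharp Gaussian-type constant, using that bounded jumps make the martingale behave like Brownian motion at this scale) yields a bound of order $(\log\log(1/t_n))^{-(1+\eta)} = (\text{const}\cdot n)^{-(1+\eta)}$ for some $\eta > 0$ depending on $\delta$. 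This is summable in $n$, so Borel–Cantelli applies. Finally one removes the drift term: the linear centering term is $O(t)$, or more precisely $O(t\,\overline\Pi(\text{something}))$-type, and since $h$ is slowly varying, $t/b(t) = \sqrt{t/\log\log(1/t)}\cdot h(1/t) \to 0$, so it does not affect the $\limsup$; and one passes from the discrete sequence to the continuum using monotonicity of the dominating quantities on each dyadic block and letting $\rho \uparrow 1$, $\delta \downarrow 0$.

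The main obstacle, and the step requiring the most care, is obtaining the exponential inequality with the correct constant, i.e.\ with the effective variance proxy being $t\,V(b(t))$ (the supremum over directions of the one-dimensional quadratic variations) rather than the full trace $\int_{|y|\le t}|y|^2\Pi(dy)$; this is why the function $V$ was introduced, and it is essential for multivariate sharpness. One has to control $\sup_{|z|\le 1}\langle X_t^{(\varepsilon_n)}, z\rangle$ by a chaining or covering argument over the unit sphere $S^{d-1}$ — covering $S^{d-1}$ by $N(\eta)$ balls of radius $\eta$, applying the one-dimensional exponential bound to each center with variance $t\,V(b(t_n))$, and absorbing the $\eta$-discretization error and the factor $N(\eta)$ into the (still summable) bound. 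Since $d$ is fixed, $N(\eta)$ is just a constant and does not interfere with summability; the delicate point is that the discretization error in the exponent must be made $o(\log\log(1/t_n))$, which works because $b(t_n) \to 0$ forces $V(b(t_n)) \to 0$. Handling the small but nonzero compensator of the intermediate jumps (those between the exponential-inequality truncation and $\varepsilon_n$) and verifying that \eqref{3} controls all the "bad events" uniformly over the block $[t_{n+1}, t_n]$ rounds out the argument.
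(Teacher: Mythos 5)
Your proposal is correct and takes the same overall route the paper does: L\'evy--It\^o truncation at level $\asymp b(t_n)$, big jumps eliminated via (\ref{3}) (the paper's relation~(\ref{trunc}) together with Lemma~\ref{cond}), geometric time blocks, a Fuk--Nagaev exponential inequality with directional variance proxy $V$, and Borel--Cantelli; the paper packages these steps into the more general Theorem~\ref{dimension_d} and then deduces Theorem~\ref{upper} by the short verification that $J(\lambda+\delta)<\infty$ follows from (\ref{3a}). The one substantive divergence is how the $d$-dimensional exponential inequality is obtained: you propose an $\eta$-net on $S^{d-1}$ and a scalar bound in each direction of the net, absorbing $N(\eta)$ and the $(1-\eta)^{-2}$ loss into the exponent, whereas the paper imports a ready-made vector-valued Fuk--Nagaev inequality (Theorem 3.1 of \cite{EL}) already formulated with the parameter $\Lambda=\sup_{|z|\le 1}\E\langle z,\xi_1\rangle^2$ and an explicit third-moment remainder, then transfers it to the L\'evy process via right-continuity and Lemma~\ref{moment_3}. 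Both routes give the sharp constant, but that explicit remainder must be shown summable along the blocks, and the paper does this via Lemma~\ref{cond}(a) --- a second place, besides the big jumps, where (\ref{3}) is consumed; be aware also that the drift $t\nu(b(t))$ is \emph{not} disposed of by $t/b(t)\to 0$ alone, since $\int_{b(t)<|y|\le 1}|y|\,\Pi(dy)$ can diverge as $t\downarrow 0$, and the paper needs Lemma~\ref{cond}(c), which again relies on (\ref{3}), for this step.
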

The corresponding lower bound result is as follows,
\begin{theo} \label{lower}
Let $\{X_t: t \ge 0\}$ be a purely non-Gaussian  d-dimensional L\'evy process. Let $\lambda \ge 0$, $h \in \mathcal{H}_q$ and let $b$ be as in Theorem \ref{upper}.  Assume that
\be \label{3b}
\limsup_{t \downarrow 0}{ V(b(t)) h^2(1/t)} \ge \lambda^2/2.
\ee
 Then we have with probability one,
 $$\limsup_{t \downarrow 0} h(1/t) \frac{|X_t|}{\sqrt{t\log \log 1/t}} \ge  (1-q)^{1/2}\lambda $$
\end{theo}
Combining the two above results we get the following  result which we could call the law of a very slowly varying function for  L\'evy processes.
\begin{cor}\label{slow}
 Let $\{X_t: t \ge 0\}$ be a purely non-Gaussian d-dimensional L\'evy process.  Suppose that 
 $b(t)=\sqrt{t \log \log 1/t}/h(1/t)$ for small $t$, where $h \in \mathcal{H}_0$.  Assume that condition (\ref{3}) is satisfied.
 If $\lambda \ge 0$, the following are equivalent,
 \begin{itemize}
\item [(a)] $$\limsup_{t \downarrow 0} h(1/t)\frac{|X(t)|}{\sqrt{t\log \log 1/t}}= \lambda \;\;\mbox{ with  probability 1 }$$
\item[(b)] $$\limsup_{t \downarrow 0} V(b(t))h^2(1/t) =\lambda^2/2.$$
\end{itemize}
 
\end{cor}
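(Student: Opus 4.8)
The plan is to obtain Corollary~\ref{slow} as an immediate consequence of Theorems~\ref{upper} and \ref{lower} via a squeezing argument, so that the proof is essentially bookkeeping. First note that $\mathcal{H}_0\subseteq\mathcal{H}_q$ for every $q\in[0,1[$, and in particular $\mathcal{H}_0\subseteq\mathcal{H}_1$; hence for $h\in\mathcal{H}_0$ both theorems apply, Theorem~\ref{lower} being used with the choice $q=0$, for which the factor $(1-q)^{1/2}$ equals $1$. Throughout put $L:=\limsup_{t\downarrow 0}h(1/t)|X_t|/\sqrt{t\log\log 1/t}$ and $M:=\limsup_{t\downarrow 0}V(b(t))h^2(1/t)\in[0,\infty]$, and recall that (\ref{3}) is assumed in the corollary (it is needed only for the upper bound).

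The key step is to prove that $L=\sqrt{2M}$ with probability one, with the convention that both sides equal $+\infty$ when $M=\infty$. For the upper bound, fix any real $\mu\ge 0$ with $\mu^2/2\ge M$; then (\ref{3a}) holds with $\mu$ in place of $\lambda$, and since (\ref{3}) holds, Theorem~\ref{upper} gives $L\le\mu$ a.s. Applying this with $\mu=\sqrt{2M}+1/n$ for $n\in\N$ and intersecting the resulting countably many almost sure events yields $L\le\sqrt{2M}$ a.s. (this is trivial when $M=\infty$). For the lower bound, fix any real $\mu\ge 0$ with $\mu^2/2\le M$; then (\ref{3b}) holds with $\mu$ in place of $\lambda$, so Theorem~\ref{lower} with $q=0$ and $h\in\mathcal{H}_0$ gives $L\ge(1-0)^{1/2}\mu=\mu$ a.s. Applying this with $\mu=\max\{0,\sqrt{2M}-1/n\}$ (resp.\ with $\mu=n$ when $M=\infty$) and taking the supremum over $n$ gives $L\ge\sqrt{2M}$ a.s. Hence $L=\sqrt{2M}$ almost surely.

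Given the identity $L=\sqrt{2M}$ a.s., the equivalence (a)$\iff$(b) is automatic: statement~(a) asserts $L=\lambda$ a.s., statement~(b) asserts $M=\lambda^2/2$, i.e.\ $\sqrt{2M}=\lambda$, and since $\sqrt{2M}$ is deterministic the two assertions coincide. In particular no separate appeal to Blumenthal's zero--one law is needed, as the squeezing already forces $L$ to be a.s.\ equal to the constant $\sqrt{2M}$. I do not expect any genuine obstacle here; the only points requiring (minor) care are the treatment of the case $M=\infty$ and the observation that Theorems~\ref{upper}--\ref{lower} may legitimately be invoked with a running parameter $\mu$ different from $\lambda$, which is fine because in both statements this parameter is a free (existentially quantified) nonnegative real.
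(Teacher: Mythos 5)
Your proposal is correct and is precisely the intended argument: the paper offers no separate proof of Corollary~\ref{slow} beyond the remark that it follows by combining Theorems~\ref{upper} and~\ref{lower}, and your squeezing argument (using $\mathcal{H}_0\subseteq\mathcal{H}_q$ for all $q\in[0,1[$, applying Theorem~\ref{lower} with $q=0$, and letting the free parameter approach $\sqrt{2M}$ from above and below) is exactly that combination carried out carefully, including the extremal cases $M=0$ and $M=\infty$.
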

Condition (\ref{3}) is not required if $h(x) = O(\sqrt{\log \log x})$. In this case it  already follows from (\ref{2}). So if we choose $h(x) =\sqrt{\log \log x}, x \ge e,$ we get:
$$\limsup_{t \downarrow 0} \frac{|X_t|}{\sqrt{t}} = \lambda \mbox{ with prob. 1} \Longleftrightarrow \limsup_{t \downarrow 0} V(t)\log \log 1/t = \lambda^2/2.$$
Also note that we can choose  functions $h(x)$ which converge extremely slowly to infinity as $x \to \infty.$ This gives us, as in the classical result of Khintchine,   normalizers $b(t)$ which are very close to $\sqrt{t\log \log 1/t}$ and this will happen if $V(t)$ converges extremely slowly to $0$ as $t \to 0$.\\ [.1cm]
The next lemma shows that  condition  (\ref{3})  is actually redundant for any function  $h$ satisfying  condition (\ref{xy}) below (and not only for functions of order $O(\sqrt{\log \log x}).$) 
\begin{lem} \label{extra-lemma}
 Let $h \in \mathcal{H}_0$  be a function such that for some $x_0 \ge e^e$ and  $x \ge x_0$,
\be  \label{xy}
 \exists\, \vartheta \in ]0,1[: h(x) \le \exp((\log \log x)^{\vartheta})
\ee
Then  we have if $b(t)=\sqrt{t \log \log 1/t}/h(1/t)$ for small $t,$
\be
V(b(t))= O(h^{-2}(1/t)) \mbox{ as }t \downarrow 0 \Longrightarrow \int_0^{1} \overline{\Pi}(b(t)) dt < \infty.
\ee
\end{lem}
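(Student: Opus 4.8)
The plan is to transfer the whole matter to the variable $w=\log\log(1/t)$ (equivalently $t=\exp(-e^{w})$), under which the factor $\log\log(1/t)$ becomes simply $w$, and to use the elementary bound
\be \label{coord}
\int_{|y|\le s}|y|^{2}\,\Pi(dy)\le d\,V(s),\qquad s>0,
\ee
which follows by applying the definition of $V$ to the $d$ coordinate directions and summing. Write $\widehat h(w):=h(e^{e^{w}})$ and $\beta(w):=b(e^{-e^{w}})$; a direct computation gives $\beta(w)^{2}=w\,e^{-e^{w}}/\widehat h(w)^{2}$, hence the exact cancellation
\be \label{cancel}
e^{-e^{w}}=\frac{\widehat h(w)^{2}}{w}\,\beta(w)^{2}.
\ee
Since $h$ is continuous, $t\mapsto t\log\log(1/t)$ is strictly increasing near $0$ and $t\mapsto h(1/t)$ is non-increasing, $b$ is continuous and strictly increasing near $0$, so $\beta$ is a continuous strictly decreasing bijection of $[w_{0},\infty)$ onto $(0,b(t_{0})]$ for a suitable small $t_{0}$, with $w_{0}=\log\log(1/t_{0})$. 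We may assume $h$ is unbounded, for if $h$ is bounded then $b(t)\ge c\sqrt t$ near $0$ and $\int_{0}^{t_{0}}\overline\Pi(b(t))\,dt\le\int_{0}^{t_{0}}\overline\Pi(c\sqrt t)\,dt<\infty$ already by (\ref{2}); in the unbounded case $\widehat h(w_{0})=h(1/t_{0})>1$ after shrinking $t_{0}$.

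Next I would carry out the substitution and a first application of Tonelli. Put $\Theta(w):=\overline\Pi(\beta(w))$, a non-decreasing (left-continuous, since $\overline\Pi$ is right-continuous and $\beta$ decreasing) function, and push the radial part of $\Pi$ forward through $z\mapsto\beta^{-1}(|z|)$; this gives $\int_{0}^{t_{0}}\overline\Pi(b(t))\,dt=\int_{w_{0}}^{\infty}\Theta(w)\,e^{w}e^{-e^{w}}\,dw$ and $\int_{|y|\le\beta(w)}|y|^{2}\Pi(dy)=\int_{[w,\infty)}\beta(w')^{2}\,d\Theta(w')$. Writing $\Theta(w)=\Theta(w_{0})+\int_{(w_{0},w]}d\Theta$ and using $\int_{w'}^{\infty}e^{w}e^{-e^{w}}\,dw=e^{-e^{w'}}$, Tonelli yields
\be \label{reduce}
\int_{0}^{t_{0}}\overline\Pi(b(t))\,dt=t_{0}\,\overline\Pi(b(t_{0}))+\int_{w_{0}}^{\infty}e^{-e^{w}}\,d\Theta(w),
\ee
the first term being finite. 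Introducing the finite positive measure $\Lambda(dw):=\beta(w)^{2}\,d\Theta(w)$, (\ref{cancel}) turns the remaining integral into $\int_{w_{0}}^{\infty}\widehat h(w)^{2}w^{-1}\,\Lambda(dw)$, while the hypothesis $V(b(t))=O(h(1/t)^{-2})$ combined with (\ref{coord}) says exactly
\be \label{tail}
G(w):=\Lambda([w,\infty))=\int_{|y|\le\beta(w)}|y|^{2}\,\Pi(dy)\le d\,C_{0}\,\widehat h(w)^{-2}\qquad(w\ge w_{0}).
\ee
A second Fubini does the rest. With $\psi(w):=\widehat h(w)^{2}/w$ (continuous, of locally finite variation, $d\psi=w^{-1}d(\widehat h^{2})-w^{-2}\widehat h^{2}\,dw$), writing $\psi(w)=\psi(w_{0})+\int_{(w_{0},w]}d\psi$ and interchanging the order of integration gives
\be
\int_{w_{0}}^{\infty}\frac{\widehat h(w)^{2}}{w}\,\Lambda(dw)=\psi(w_{0})G(w_{0})+\int_{w_{0}}^{\infty}G(w)\,d\psi(w)\le\psi(w_{0})G(w_{0})+d\,C_{0}\int_{w_{0}}^{\infty}\frac{d(\widehat h(w)^{2})}{w\,\widehat h(w)^{2}},
\ee
where we used (\ref{tail}), $G\ge0$, and $d\psi^{+}\le w^{-1}d(\widehat h^{2})$; the interchange is legitimate because $\int_{w_{0}}^{\infty}G(w)|d\psi(w)|\le d\,C_{0}\int_{w_{0}}^{\infty}\big(w^{-1}\widehat h(w)^{-2}d(\widehat h(w)^{2})+w^{-2}dw\big)<\infty$ by the estimate below.

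Everything thus reduces to showing $\int_{w_{0}}^{\infty}d(\widehat h(w)^{2})/(w\,\widehat h(w)^{2})<\infty$, and this is where (\ref{xy}) is used. Since $\log\log(e^{e^{w}})=w$, condition (\ref{xy}) says precisely that $\widehat h(w)\le e^{w^{\vartheta}}$ for $w\ge w_{0}$ (after enlarging $w_{0}$), whence $w\ge\big(\tfrac12\log\widehat h(w)^{2}\big)^{1/\vartheta}$. As $P:=\widehat h^{2}$ is continuous and increasing with $P(w_{0})>1$, the change of variables $p=P(w)$ gives
\be
\int_{w_{0}}^{\infty}\frac{d(\widehat h(w)^{2})}{w\,\widehat h(w)^{2}}\le 2^{1/\vartheta}\int_{P(w_{0})}^{\infty}\frac{dp}{p\,(\log p)^{1/\vartheta}}=2^{1/\vartheta}\int_{\log P(w_{0})}^{\infty}\frac{dv}{v^{1/\vartheta}}<\infty,
\ee
the last integral converging precisely because $1/\vartheta>1$. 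Together with (\ref{reduce}) this proves $\int_{0}^{t_{0}}\overline\Pi(b(t))\,dt<\infty$, which is the assertion. The conceptual content is the cancellation (\ref{cancel}), which makes the two sides of the implication line up, together with the final one-line integral, which is where $\vartheta<1$ enters in an essential way; the hypothesis $V(b(t))=O(h(1/t)^{-2})$ is used only through (\ref{tail}). I expect the main obstacle to be bookkeeping rather than ideas: carefully justifying the change of variables for the radial part of $\Pi$ (note $\Theta$ is only left-continuous, but still monotone, and $\Theta_{*}$-type pushforwards are bona fide Stieltjes measures), the two Fubini/Tonelli steps with the signed measure $d\psi$, and the elementary monotonicity facts near $0$ that make $\beta$ a bijection.
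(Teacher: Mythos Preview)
Your proof is correct and takes a genuinely different route from the paper's, though the two share the same underlying ingredients.

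The paper first proves an auxiliary lemma (Lemma~4.1): for any non-decreasing $g$ with $\int_0^c g^{-1}<\infty$, one has $\int_{|y|\le 1}|y|^2/g(\overline V(|y|))\,\Pi(dy)<\infty$, established by a quantile-transform argument. It then uses the equivalence $\int_0^1\overline\Pi(b(t))\,dt<\infty\Leftrightarrow\int b^{\leftarrow}(|y|)\,\Pi(dy)<\infty$, writes $b^{\leftarrow}(x)=x^2 h^2(1/b^{\leftarrow}(x))/\log\log(1/b^{\leftarrow}(x))$, and combines the hypothesis $V(x)\le C_1 h^{-2}(1/b^{\leftarrow}(x))$ with condition~(\ref{xy}) to obtain the pointwise bound $b^{\leftarrow}(|y|)\le C_2\,|y|^2/\bigl(V(|y|)(\log_+ 1/V(|y|))^{1/\vartheta}\bigr)$; Lemma~4.1 with $g(u)=u(\log_+ 1/u)^{1/\vartheta}$ then finishes.

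You bypass the auxiliary lemma entirely. After the substitution $w=\log\log(1/t)$ and one Tonelli step (which is really the same equivalence, in disguise), the remaining integral is $\int\psi\,d\Lambda$ with $\psi=\widehat h^{2}/w$ and $\Lambda$ the $|y|^2$-weighted radial pushforward; a second integration by parts against the tail $G(w)=\overline V(\beta(w))\le dC_0\,\widehat h(w)^{-2}$ reduces everything to the elementary convergence of $\int d(\widehat h^{2})/(w\,\widehat h^{2})$, and this is exactly where $\vartheta<1$ enters. Both proofs use the hypothesis $V(b(t))=O(h^{-2}(1/t))$ at the same moment (to control $\overline V\circ\beta$), and condition~(\ref{xy}) at the same moment (to make the final integral converge). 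The paper's route isolates a clean, reusable statement about $\int |y|^2/g(\overline V(|y|))\,\Pi(dy)$; yours is more direct and self-contained, at the cost of heavier Stieltjes bookkeeping---endpoint conventions for the left-continuous $\Theta$, and the Fubini against the signed measure $d\psi$---which you correctly identify as the only real thing left to write out carefully.
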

Condition (\ref{xy}) is sharp. The assertion of Lemma \ref{extra-lemma} is no longer true if $\vartheta=1.$ 
One can find examples where one has $V(t) \sim (\log 1/t)^{-2}$ as $t \to 0$ and, at the same time,   $\int_0^{e^{-2}} \overline{\Pi}(\sqrt{t \log \log 1/t}/\log 1/t) dt =\infty.$ (See, for instance, Example 2 in \cite{Fri}.)
So we cannot apply Lemma \ref{extra-lemma} if $h(x)=\log x, x \ge 1.$\\[.2cm] Condition (\ref{xy}) is satisfied for all functions $h(x)=(\log \log x)^{q}, x >e,$ where  $q>0.$
We arrive at the following general LIL  for  L\'evy processes from which one can easily re-obtain the afore-mentioned result of \cite{Fri} and find many other examples.
\begin{cor} \label{LIL}  Let $\{X_t: t \ge 0\}$ be a purely non-Gaussian d-dimensional L\'evy process. 
Given any  $- \infty < p < 1/2$ and any $\lambda \ge 0$, the following are equivalent:
\begin{itemize}
\item [(a)] $$\limsup_{t \downarrow 0} \frac{|X(t)|}{\sqrt{t}(\log \log 1/t)^{p}}= \lambda \;\;\mbox{ with probability 1}$$
\item[(b)] $$\limsup_{t \downarrow 0} V(t) (\log \log 1/t)^{1 -2p} =\lambda^2/2.$$
\end{itemize}
\end{cor}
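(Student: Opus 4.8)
The plan is to obtain Corollary~\ref{LIL} as a special case of Corollary~\ref{slow}, using Lemma~\ref{extra-lemma} to dispose of the superfluous hypothesis (\ref{3}) and Theorems~\ref{upper}--\ref{lower} for the direction in which Corollary~\ref{slow} is not directly applicable. Since $p<1/2$, I would set $h(x)=(\log\log x)^{1/2-p}$ for $x\ge e$ and $h\equiv 0$ on $[0,e]$; then $h$ is continuous, non-decreasing, lies in $\mathcal H_0$, and satisfies (\ref{xy}), as remarked in the text (the exponent $1/2-p$ is a fixed positive number). For this $h$ the associated normalizer is $b(t)=\sqrt{t\log\log 1/t}/h(1/t)=\sqrt{t}\,(\log\log 1/t)^{p}$ for small $t$, and $h^{2}(1/t)=(\log\log 1/t)^{1-2p}$, so that $h(1/t)|X_t|/\sqrt{t\log\log 1/t}=|X_t|/(\sqrt t\,(\log\log 1/t)^{p})$ and statement (a) of Corollary~\ref{LIL} is verbatim statement (a) of Corollary~\ref{slow}.

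Next I would reconcile the two forms of condition (b). From $b(t)=\sqrt t\,(\log\log 1/t)^{p}$ one gets $\log(1/t)=2\log(1/b(t))+O(\log\log\log(1/t))=2\log(1/b(t))\,(1+o(1))$, hence $\log\log(1/t)\sim\log\log(1/b(t))$ and therefore $h^{2}(1/t)=(\log\log 1/t)^{1-2p}\sim(\log\log 1/b(t))^{1-2p}$. Since $t\mapsto b(t)$ is continuous and strictly increasing for small $t$ (its logarithmic derivative equals $\tfrac1{2t}(1+o(1))$), the change of variables $s=b(t)$ is admissible inside $\limsup_{t\downarrow 0}$ and yields
\[
\limsup_{t\downarrow 0}V(b(t))h^{2}(1/t)=\limsup_{s\downarrow 0}V(s)(\log\log 1/s)^{1-2p}.
\]
Thus (b) of Corollary~\ref{LIL} is equivalent to $\limsup_{t\downarrow 0}V(b(t))h^{2}(1/t)=\lambda^{2}/2$, which is precisely (b) of Corollary~\ref{slow}.

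It then remains to remove the assumption (\ref{3}) that Corollary~\ref{slow} still carries, and this is where Lemma~\ref{extra-lemma} is used. For ``(b)$\Rightarrow$(a)'': if (b) holds then $\limsup_{t\downarrow0}V(b(t))h^{2}(1/t)=\lambda^{2}/2<\infty$, so $V(b(t))=O(h^{-2}(1/t))$, and Lemma~\ref{extra-lemma} yields (\ref{3}); Corollary~\ref{slow} now applies and gives (a). For ``(a)$\Rightarrow$(b)'', set $c:=\limsup_{t\downarrow0}V(b(t))h^{2}(1/t)\in[0,\infty]$. If $c>\lambda^{2}/2$, choose $\lambda'>\lambda$ with $(\lambda')^{2}/2\le c$; Theorem~\ref{lower} (with $q=0$, so the factor $(1-q)^{1/2}$ equals $1$) gives $\limsup_{t\downarrow0}h(1/t)|X_t|/\sqrt{t\log\log 1/t}\ge\lambda'>\lambda$, contradicting (a). If $c<\lambda^{2}/2$, then $c<\infty$, so again $V(b(t))=O(h^{-2}(1/t))$ and Lemma~\ref{extra-lemma} gives (\ref{3}); choosing $\lambda'$ with $\sqrt{2c}\le\lambda'<\lambda$ and invoking Theorem~\ref{upper} yields $\limsup_{t\downarrow0}h(1/t)|X_t|/\sqrt{t\log\log 1/t}\le\lambda'<\lambda$, again contradicting (a). Hence $c=\lambda^{2}/2$, i.e.\ (b) holds.

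I do not expect a genuine obstacle: the corollary is an assembly of the preceding results. The points requiring care are the asymptotic identity $\log\log 1/t\sim\log\log 1/b(t)$ together with the monotonicity of $b$ that legitimises the substitution, and the verification that the chosen $h$ lies in $\mathcal H_0$ and obeys (\ref{xy}) --- exactly what makes Corollary~\ref{slow} and Lemma~\ref{extra-lemma} available under (b) alone. The only place one cannot simply quote Corollary~\ref{slow} is the implication (a)$\Rightarrow$(b), where (\ref{3}) is not given a priori; there one sandwiches $c$ between $\lambda^{2}/2$ from both sides via Theorems~\ref{lower} and~\ref{upper}, re-deriving (\ref{3}) from the finiteness of $c$ whenever Theorem~\ref{upper} is needed.
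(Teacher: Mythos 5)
Your proof is correct and is essentially what the paper intends: the text preceding Corollary~\ref{LIL} introduces Lemma~\ref{extra-lemma} precisely to dispose of condition (\ref{3}) and explicitly notes that $h(x)=(\log\log x)^{q}$, $q>0$, satisfies (\ref{xy}), so that with $q=1/2-p$ the corollary is an application of Corollary~\ref{slow}; the remaining identification $\limsup_t V(b(t))(\log\log 1/t)^{1-2p}=\limsup_s V(s)(\log\log 1/s)^{1-2p}$ via $\log\log(1/b(t))\sim\log\log(1/t)$ and the monotone substitution $s=b(t)$ is the natural bridge. Your careful two-sided contradiction argument for (a)$\Rightarrow$(b), invoking Theorem~\ref{lower} (which by its own proof does not need (\ref{3})) when $c>\lambda^2/2$ and deriving (\ref{3}) from Lemma~\ref{extra-lemma} before using Theorem~\ref{upper} when $c<\lambda^2/2$, is sound and fills in what the paper leaves implicit.
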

An important tool for proving these results will be a general result on the almost sure behavior of normalized $d$-dimensional L\'evy processes which extends Theorem 2.1 in \cite{Sav} to this more general setting. We weaken  assumption (2.2) in this paper slightly by still assuming that $b(t)/t$ converges to infinity as $t \to 0$, but we do not require monotonicity.  Our condition (\ref{50}) holds in this case as well, but also if $b(t)=\sqrt{t\log \log 1/t}/h(1/t), t > 0,$ where $h: [0, \infty[ \to ]0,\infty[$ can be any function which is slowly varying at infinity. This easily follows from the Karamata representation for slowly varying functions (see, for instance, \cite{Ber0}, page 9). 
\begin{theo} \label{dimension_d}
Let $\{X_t: t \ge 0\}$ be a purely non-Gaussian d-dimensional L\'evy process and let $b(t), 0 \le t \le 1$ be a continuous and increasing real-valued function such that $b(t)/t \to \infty$ as $t \to 0.$  Assume also that the following two conditions hold on a suitable interval $[0, t_0]$:
\begin{eqnarray}
b(t)/t^{\rho} \mbox{ is non-decreasing for some }\rho >1/3 \label{49}\\
\forall \epsilon > 0\; \exists\, 0 < \delta_{\epsilon} < t_0:\; b(s)/b(t) \ge (1-\epsilon) s/t, 0 \le s \le t \le \delta_{\epsilon} \label{50}
\end{eqnarray}
Under condition (\ref{3}) we have with probability one, 
$$\limsup_{t \downarrow 0} \frac{|X_t|}{b(t)} = \alpha_0,$$
where
$$\alpha_0:=\sup\left\{\alpha \ge 0: \int_0^1 \frac{1}{t} \;\exp\left(-\frac{\alpha^2 b^2(t)}{2t V(b(t))}\right) dt = \infty\right\} .$$
 \end{theo}
We mention that Theorem \ref{dimension_d} and all the previous results remain true if we replace the function $V(t)$ by the larger function
\[V_1(t):=\sup_{|z| =1} \int_{|\langle y, z \rangle| \le t} \langle y, z \rangle^2 \Pi(dy).\]
We will prove this at the end of Section 3.
This function plays an important role  in the weak convergence theory for matrix normalized L\'evy processes (see  \cite{MM}).  \\[.1cm]
We now turn to the cluster set in Theorem \ref{dimension_d}, that is, the set of all limit points of $X(t_n)/b(t_n)$ for sequences $t_n \downarrow 0$. We denote this set by $$C(\{X_t/b(t): t \downarrow 0\}).$$  It is well known that this (random) set is  equal to a deterministic set $A \subset \R^d$ with probability one.  (For a proof of a  more general version of this fact the reader is referred to  Sect. 2  in \cite{Buch}.) 
\begin{theo} \label{star} 
 Assume that the conditions of Theorem \ref{dimension_d} are satisfied. If $\alpha_0 <\infty$  the deterministic cluster set $A=C(\{X_t/b(t): t \downarrow 0\})$ is compact,  symmetric about zero and star-like with respect to zero. Moreover, we have $\alpha_0=\sup_{x \in A} |x| $ and there exists a unit vector $z \in \R^d$ such that $\alpha_0 z \in A.$
\end{theo}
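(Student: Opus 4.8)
The plan is to reduce each assertion to the one-dimensional case by projecting onto unit vectors, using Theorem \ref{dimension_d} as the basic input. First I would establish that $A$ is bounded: since $\alpha_0 < \infty$, Theorem \ref{dimension_d} gives $\limsup_{t\downarrow 0}|X_t|/b(t)=\alpha_0$ a.s., so every limit point $x$ of $X(t_n)/b(t_n)$ satisfies $|x|\le\alpha_0$, whence $A\subset\{x:|x|\le\alpha_0\}$ and in particular $\sup_{x\in A}|x|\le\alpha_0$. Closedness of $A$ is automatic (it is a set of limit points), so $A$ is compact. Symmetry about zero follows because $\{-X_t:t\ge 0\}$ is again a purely non-Gaussian L\'evy process with L\'evy measure the pushforward of $\Pi$ under $y\mapsto -y$; this reflected measure has the same $\overline{\Pi}$, the same $V$, and hence the same $\alpha_0$, and $-X$ has the same cluster set structure, so $x\in A\iff -x\in A$. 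For star-likeness, the key observation is the scaling built into the normalization: because $b$ satisfies (\ref{50}), for $0<c\le 1$ one can choose, given a sequence $t_n\downarrow 0$ with $X(t_n)/b(t_n)\to x$, a companion sequence $s_n\downarrow 0$ with $s_n/t_n$ tending to the value making $b(s_n)/b(t_n)\to c$ (the near-linearity $b(s)/b(t)\ge(1-\epsilon)s/t$ together with continuity and $b(t)/t\to\infty$ gives enough flexibility to hit any target ratio in $]0,1]$), and then a uniform-small-increment estimate shows $X(s_n)/b(s_n)\to cx$; hence $cx\in A$, which is exactly star-likeness with respect to zero.

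The remaining and most delicate point is the reverse inequality $\sup_{x\in A}|x|\ge\alpha_0$ together with the existence of a unit vector $z$ with $\alpha_0 z\in A$. Here I would fix a unit vector $z$ along which the one-dimensional process $\langle z,X_t\rangle$ achieves the extremal behavior. Concretely, from the definition of $\alpha_0$ and of $V(t)=\sup_{|z|\le 1}\int_{|y|\le t}\langle y,z\rangle^2\Pi(dy)$, for each $t$ near $0$ there is a maximizing direction $z_t$; by compactness of the unit sphere one extracts a subsequence $t_k\downarrow 0$ with $z_{t_k}\to z$ for some unit vector $z$. Along this direction the scalar L\'evy process $Y^{(z)}_t:=\langle z,X_t\rangle$ has a one-dimensional $V$-function which, on the relevant scales, is asymptotically at least $V(b(t))$, so the one-dimensional version of Theorem \ref{dimension_d} (i.e. Savov's integral test, which is the $d=1$ case here) yields $\limsup_{t\downarrow 0}|Y^{(z)}_t|/b(t)\ge\alpha_0$ a.s. Combined with the symmetry already established, there is then a sequence $t_n\downarrow 0$ with $\langle z,X(t_n)\rangle/b(t_n)\to\alpha_0$ (choosing signs and passing to a further subsequence); since $\{X(t_n)/b(t_n)\}$ is bounded it has a convergent subsequence with limit $x\in A$, and $\langle z,x\rangle=\alpha_0=|x|$ forces $x=\alpha_0 z$. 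This gives both $\alpha_0 z\in A$ and $\sup_{x\in A}|x|\ge|\alpha_0 z|=\alpha_0$, closing the loop with the bound from the first paragraph.

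I expect the main obstacle to be making the direction-selection argument fully rigorous: one must show that the scalar process in the limiting direction $z$ really does inherit the extremal integral-test behavior, i.e. that $\liminf$ of the one-dimensional $V$-function of $\langle z,X_t\rangle$ relative to $V(b(t))$ is $1$ on a sequence dense enough to drive the integral test to divergence at level $\alpha_0$. This requires care because the maximizing direction $z_t$ may genuinely vary with $t$, and one must control the continuity of $z\mapsto\int_{|y|\le t}\langle y,z\rangle^2\Pi(dy)$ uniformly enough near the limiting direction; a compactness-plus-continuity argument on the sphere, combined with the monotonicity $V(b(t_k))\le\int_{|\langle y,z_{t_k}\rangle|\le\cdot}$ and a diagonal extraction, should suffice, but it is the step where the bookkeeping is least routine. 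The star-likeness step also hides a technical lemma—the uniform modulus-of-continuity estimate $\sup_{s\le u\le t}|X_u-X_s|/b(t)\to 0$ along the relevant scales—but this is of the same type as estimates already needed for Theorem \ref{dimension_d} and can be quoted from there.
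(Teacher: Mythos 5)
Your proof of compactness and of $\sup_{x\in A}|x|\le\alpha_0$ is correct and matches the paper. The symmetry argument via the reflected process $-X_t$ is sound in spirit (after observing that $-X_t$ has the same covariance matrices $A^2(b)$ and that the drift is asymptotically negligible), though the paper's route through Lemma~\ref{lemClu}(c) --- symmetry of the centered Gaussian $\eta_t$ --- is cleaner and more explicit.

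The star-likeness argument, however, is genuinely broken. You propose: given $X(t_n)/b(t_n)\to x$, choose $s_n<t_n$ with $b(s_n)/b(t_n)\to c$ and deduce $X(s_n)/b(s_n)\to cx$ from a small-increment estimate. But (i) the algebra is wrong: if $X(s_n)-X(t_n)$ were small compared to $b(t_n)$, one would get $X(s_n)/b(s_n)\approx X(t_n)/b(s_n)=\big(X(t_n)/b(t_n)\big)\big(b(t_n)/b(s_n)\big)\to x/c$, which has norm \emph{larger} than $|x|$; and (ii) there is no small-increment estimate available here, since $s_n/t_n$ is bounded away from $1$, so $|t_n-s_n|\asymp t_n$, and the increment of the L\'evy process over an interval of length $\asymp t$ is typically of order $b(t)$, not $o(b(t))$. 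The assertion $cx\in A$ is not a pathwise scaling fact; it is a probabilistic one, and the paper proves it by applying T.W.\ Anderson's inequality
$\PP\{|\eta-x|<\delta\}\le\PP\{|\eta-sx|<\delta\}$, $0\le s\le 1$,
to the Gaussian approximation criterion of Lemma~\ref{lemClu}(c). There does not appear to be any way to salvage the pathwise rescaling approach.

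Your argument for the existence of a unit $z$ with $\alpha_0 z\in A$ is also unnecessarily complicated and probably unrepairable as stated. Fixing a single direction $z$ obtained as a limit of maximizers $z_t$ of $V$ and invoking a one-dimensional integral test along $\langle z,X_t\rangle$ requires the scalar quantity $V_z(b(t)):=\int_{|y|\le b(t)}\langle y,z\rangle^2\Pi(dy)$ to match $V(b(t))$ on a set of $t$'s large enough to drive divergence at level $\alpha_0$; but the maximizing direction can rotate as $t\to 0$, and then for every fixed $z$ one has $V_z(b(t))\ll V(b(t))$ for most $t$, making the one-dimensional test converge below $\alpha_0$. You acknowledge this is the delicate step, and indeed it is not clear it can be made to work. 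The paper's argument bypasses all of this: take an $\omega$ for which simultaneously $\limsup_{t\downarrow 0}|X_t(\omega)|/b(t)=\alpha_0$ and the cluster set of $\{X_t(\omega)/b(t)\}$ equals $A$ (both hold a.s., so their intersection has full probability); pick $t_n\downarrow 0$ with $|X_{t_n}(\omega)|/b(t_n)\to\alpha_0$; extract a convergent subsequence, whose limit has norm $\alpha_0$ and lies in $A$. This is elementary and avoids any direction-selection.

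In short: compactness is fine, symmetry is essentially fine, but both the star-likeness step and the existence of $\alpha_0 z\in A$ have genuine gaps. The missing ingredient is Lemma~\ref{lemClu}(c) (the Gaussian integral-test criterion for cluster points), which converts these geometric statements about $A$ into facts about centered Gaussian measures, where symmetry and Anderson's inequality apply directly.
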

\begin{theo}\label{star-}
 Suppose that $0< \alpha_0 < \infty$  and  that $A$ is a subset of $\R^d$ as in Theorem \ref{star}. Let  $h \in \mathcal{H}_0$ be a function such that $\lim_{x \to \infty} h(x)=\infty$ and set
$b(t)=\sqrt{t \log \log 1/t}/h(1/t), t \le e^{-e}.$   There exists a purely non-Gaussian $d$-dimensional L\'evy process $\{X_t: t \ge 0\}$ such that we have  with probability one, 
$\limsup_{t\downarrow 0} |X_t|/b(t) = \alpha_0 \mbox{ and }
C(\{ X_t/b(t): t \downarrow 0\})=A.$
\end{theo}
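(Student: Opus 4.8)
The plan is to build the required L\'evy process by prescribing its L\'evy measure $\Pi$ explicitly, as a countable superposition of one-dimensional pieces aligned with the radial structure of $A$, and then to read off the two assertions from the results already established. First I would fix a countable set $\{a_j:j\ge 1\}\subset A\setminus\{0\}$ which is dense in $A$ and, since $A$ is star-like, may be taken so that $A=\overline{\bigcup_j[0,a_j]}$ (a compact star-like set equals the union of the segments joining $0$ to its radial boundary points). Along the ray $\R_+\,a_j/|a_j|$ I would place a one-dimensional L\'evy measure $\Pi_j$ concentrated near a decreasing sequence of sizes $|a_j|b(s_{j,n})$, and set $\Pi=\sum_j\Pi_j$ (discarding finitely many scales so that $\Pi$ is supported in the unit ball).

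Two competing requirements govern the choice of the $\Pi_j$'s. On the one hand, the total variance function must satisfy $\limsup_{t\downarrow0}V(b(t))h^2(1/t)=\alpha_0^2/2$ together with condition (\ref{3}); then Theorem \ref{dimension_d} (equivalently Corollary \ref{slow}, since $h\in\mathcal{H}_0$) gives $\limsup_{t\downarrow0}|X_t|/b(t)=\alpha_0$, and Theorem \ref{star} guarantees that the deterministic cluster set $A'=C(\{X_t/b(t):t\downarrow0\})$ is compact, symmetric and star-like with $\sup_{x\in A'}|x|=\alpha_0$. On the other hand, for each $j$ the piece $\Pi_j$ must be rich enough that the event ``a single jump along $\R_+\,a_j/|a_j|$ of size $\approx|a_j|b(s_{j,n})$ occurs in $(\tfrac{1}{2}s_{j,n},s_{j,n}]$ and the remainder of $X_{s_{j,n}}$ is $o(b(s_{j,n}))$'' has a probability $p_{j,n}$ with $\sum_n p_{j,n}=\infty$. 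For disjoint scales these events are independent, so the second Borel--Cantelli lemma yields $X_{s_{j,n}}/b(s_{j,n})\to a_j$ along a subsequence, almost surely, for every $j$; by the zero--one law governing the germ field at $0$, each $a_j$ then belongs to $A'$ with probability one, hence $\{a_j\}\subset A'$, and by star-likeness of $A'$ and density of $\{a_j\}$ one gets $A'\supseteq\overline{\bigcup_j[0,a_j]}=A$.

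For the reverse inclusion $A'\subseteq A$ the key point is that every jump of $\Pi$ points along some ray $\R_+\,a_j/|a_j|$ with $a_j\in A$, so a configuration dominated by one such jump forces $X_t/b(t)$ into $A$: if a jump of size $\le|a_j|b(t)$ occurs and everything else is small then, $b$ being increasing, $X_t/b(t)$ lies close to a point of $[0,a_j]\subseteq A$. One must then show that the exceptional configurations are negligible. Working along the discretization $t=2^{-m}$, the first Borel--Cantelli lemma handles the probability that two or more jumps of size $\ge\varepsilon b(2^{-m})$ occur in $(0,2^{-m}]$ (of order $(2^{-m}\overline{\Pi}(\varepsilon b(2^{-m})))^2$, which is summable in $m$ once (\ref{3}) holds), and the part of $X$ made of jumps of size $\le\varepsilon b(t)$ is $o(b(t))$ almost surely, by applying Theorem \ref{dimension_d} (or the estimates behind Theorem \ref{upper}) to that truncated process. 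Hence, for every $\varepsilon>0$, eventually $X_t/b(t)$ lies within distance $\varepsilon$ of $A$, so $A'\subseteq\overline{A}=A$. Combined with the previous paragraph this gives $A'=A$, and then $\limsup_{t\downarrow0}|X_t|/b(t)=\sup_{x\in A'}|x|=\sup_{x\in A}|x|=\alpha_0$.

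The main obstacle is the quantitative construction of the pieces $\Pi_j$: one must simultaneously keep $\limsup_{t\downarrow0}V(b(t))h^2(1/t)$ from exceeding $\alpha_0^2/2$ (so the $\limsup$ is not too large), make it actually attain $\alpha_0^2/2$ along some sequence (so it is not too small), preserve condition (\ref{3}) despite the many moderate jumps being inserted, and still have $\sum_n p_{j,n}=\infty$ for every $j$ — including those $a_j$ with $|a_j|$ close to $\alpha_0$, where the visiting event is rarest. These demands pull against one another, and the tension is tightest precisely when $h$ varies very slowly (for instance $h(x)=\log\log x$); the scales $s_{j,n}$, the jump-sizes and the jump-masses therefore have to be chosen with care, with all the $\varepsilon$'s eventually let to $0$ along a sequence of directions approaching an extremal one, and carrying out this bookkeeping is where the real work lies.
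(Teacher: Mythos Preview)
Your overall architecture (write $A$ as the closure of countably many radial segments and build a discrete L\'evy measure supported on those rays) matches the paper's, but the mechanism you propose for showing $A'\supseteq A$ is incompatible with the integrability condition~(\ref{3}) and therefore cannot be carried out.

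Concretely: you want, for each extremal point $a_j$ (in particular for one with $|a_j|=\alpha_0$), events ``a jump of size $\approx |a_j|\,b(s_{j,n})$ in direction $a_j/|a_j|$ occurs in $(\tfrac12 s_{j,n},s_{j,n}]$'' with probabilities $p_{j,n}$ summing to infinity. The expected number of such jumps is $\lambda_{j,n}\asymp s_{j,n}\,\Pi\{y_{j,n}\}$ with $|y_{j,n}|=|a_j|\,b(s_{j,n})$, and one always has $p_{j,n}\le \lambda_{j,n}$ (both when $\lambda_{j,n}$ is small, since then $p_{j,n}\sim\lambda_{j,n}$, and when $\lambda_{j,n}$ is large, since then many jumps occur and ``the remainder is $o(b(s_{j,n}))$'' fails). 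Hence $\sum_n p_{j,n}=\infty$ forces $\sum_n s_{j,n}\Pi\{y_{j,n}\}=\infty$. But for $|a_j|=\alpha_0$ one has $b^{\leftarrow}(|y_{j,n}|)\asymp s_{j,n}$, so this contradicts $\int_D b^{\leftarrow}(|y|)\,\Pi(dy)<\infty$, which is exactly condition~(\ref{3}) in the form~(\ref{inv-function}). Without~(\ref{3}) you cannot invoke Theorem~\ref{dimension_d} to get $\alpha_0<\infty$, nor Theorem~\ref{star} to know $A'$ is star-like, so the whole argument collapses. The difficulty you flag at the end (``the tension is tightest when $h$ varies very slowly'') is not a bookkeeping issue but a genuine obstruction: a single-large-jump mechanism cannot produce cluster points on the boundary $|x|=\alpha_0$.

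The paper avoids this by using a completely different mechanism for membership in the cluster set: Lemma~\ref{lemClu} reduces ``$x\in A'$'' to the divergence of $\int_0^1 t^{-1}\PP\{|\sqrt{t}A(b(t))\eta/b(t)-x|<\epsilon\}\,dt$, a condition phrased entirely in terms of Gaussian vectors with covariance $A^2(b(t))=\bigl(\int_{|y|\le b(t)}y_iy_j\,\Pi(dy)\bigr)$. The point $a_j$ is then reached not by one jump but by the CLT-like accumulation of many small jumps in direction $z_j$; what matters is the \emph{variance} $\langle z_j,A^2(b(t))z_j\rangle$, not the jump intensity, and one can make this variance large enough along a sequence of $t$'s while keeping~(\ref{3}) finite. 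Likewise, the inclusion $A'\subseteq A$ is obtained from the same Gaussian criterion by showing the integral is finite for $x\notin A$, rather than by a jump-counting argument. Your proposed route to $A'\subseteq A$ also has a loose end: a jump $y_{j,n}$ designed for scale $s_{j,n}$ can occur at a time $t<s_{j,n}$ with $|y_{j,n}|\le b(t)$ (this is possible when $|a_j|<1$), producing $X_t/b(t)\approx (|y_{j,n}|/b(t))\,a_j/|a_j|$ with modulus strictly larger than $|a_j|$, which need not lie in $A$.
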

All the results in this section have counterparts for the $d$-dimensional random walk in the infinite second moment case (see \cite{EL} for Theorems 2.1-2.3). Cluster sets in the random walk case have been studied in \cite{E-1} and \cite{EK}  where it also has been shown that any bounded and closed set which is symmetric and star-like w.r.t. zero can occur as cluster set, but this is done in these two papers only for a very specific normalizing sequence. A new feature is here that Theorem \ref{star-} holds for a large class of normalizing sequences and we  give a somewhat easier proof since we can define a suitable L\'evy measure directly via a certain representation of the set $A$.\\
Theorem \ref{dimension_d} will be proven in Section 3. Our proof follows essentially the method developed in \cite{Ber} and  which was further refined in \cite{Sav}. A new element is that we use  exponential inequalities for sums of independent random vectors instead of Berry-Esseen type results. This should make it possible to extend our method  to the infinite-dimensional case should this be needed. In Section 4 we then show how Theorems \ref{upper} and  \ref{lower} follow from Theorem \ref{dimension_d} and we prove Lemma \ref{extra-lemma}. In  Section 5 we first provide  a general result on clustering (see Theorem \ref{Kesten2}) which is valid for any L\'evy process and we then derive a criterion for purely non-Gaussian L\'evy processes from it (see Lemma \ref{lemClu}). We finally prove Theorems \ref{star} and \ref{star-} in subsections 5.3 and 5.4.
\section{Proof of Theorem \ref{dimension_d}}
Recall that we assume that the L\'evy measure $\Pi$ is supported by the unit ball $D$ in $\R^d.$
Since any function $b$ we consider is continuous and increasing, its  inverse function $b^{\leftarrow}(t)$ is well defined for $0 \le t \le b(1)$. As we are only interested in the local behavior at zero, we can assume 
w.l.o.g. that $b(1)=1$  by redefining the function $b$ on a suitable interval $[s_0, 1]$, where $0 < s_0 < 1.$\\
By a standard argument from measure theory, we also have that condition (\ref{3}) holds if and only if
\be \label{inv-function}
\int_{0 < |y| \le 1} b^{\leftarrow}(|y|)\Pi(dy) < \infty.
\ee
We need the following lemma.
\begin{lem} \label{cond} Assume that $b: [0,1] \to [0,1]$ satisfies  conditions (\ref{49}) and (\ref{50}).  If $\Pi$ is a L\'evy measure supported by the unit ball $D \subset \mathbb{R}^d$ and if
 condition (\ref{3}) holds,  we have:
\begin{itemize}
\item [(a)] $$\int_0^1 \frac{1}{b(t)^3}\int_{0 < |y| \le b(t)} |y|^3 \Pi(dy) dt < \infty.$$
\item [(b)] $$\int_0^1 \overline{\Pi}(\epsilon b(t)) dt < \infty,\, 0 < \epsilon <1.$$
\item [(c)]  $$\frac{t}{b(t)}\int_{b(t) < |y| \le 1} |y| \Pi(dy) \to 0 \mbox{ as } t \to 0.$$

\end{itemize}
\end{lem}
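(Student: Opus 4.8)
\emph{Approach.} My plan is to reduce each of (a)--(c), via Tonelli's theorem or a change of variables, to the equivalent form \eqref{inv-function} of the hypothesis \eqref{3} --- namely $\int_{0<|y|\le1}b^{\leftarrow}(|y|)\,\Pi(dy)<\infty$ --- together with the elementary facts that $\overline{\Pi}(\eta)<\infty$ for every $\eta>0$ and that $t/b(t)\to0$ as $t\downarrow0$. Condition \eqref{49} will enter through the power bound $b(t)\ge b(s)(t/s)^{\rho}$, valid for $s\le t\le t_0$, and condition \eqref{50} through the near-monotonicity of $t\mapsto b(t)/t$ as $t\downarrow0$. I will use that $b$ is continuous and increasing with $b(1)=1$, so that $b^{\leftarrow}$ is well defined on $[0,1]$ and $\{t\in[0,1]:b(t)\ge x\}=[b^{\leftarrow}(x),1]$ for $0<x\le1$. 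I expect (a) to be the one delicate part, because it is there that the exponent condition $\rho>1/3$ is genuinely used; (b) and (c) should be routine bookkeeping.

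\emph{For (a)} I would first apply Tonelli to rewrite the double integral as
\[
\int_{0<|y|\le1}|y|^3\Bigl(\int_{b^{\leftarrow}(|y|)}^1\frac{dt}{b(t)^3}\Bigr)\Pi(dy).
\]
Fixing $y$ and writing $s=b^{\leftarrow}(|y|)$ (so $b(s)=|y|$), I would use \eqref{49} in the form $b(t)\ge|y|(t/s)^{\rho}$ for $s\le t\le t_0$ (and $b(t)\ge b(t_0)>0$ for larger $t$, or $b(t)\ge|y|$ throughout $[s,1]$ when $s\ge t_0$) to bound the inner integral. Because $3\rho>1$, $\int_s^{\infty}t^{-3\rho}\,dt$ is of order $s^{1-3\rho}$, and the inner integral comes out at most $s/\big((3\rho-1)|y|^3\big)+C$ with $C$ depending only on $b$ and $\rho$. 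Plugging this back gives
\[
\frac{1}{3\rho-1}\int_{0<|y|\le1}b^{\leftarrow}(|y|)\,\Pi(dy)+C\int_{0<|y|\le1}|y|^3\,\Pi(dy)<\infty
\]
by \eqref{3} (in the form \eqref{inv-function}) and \eqref{1}. The point where $\rho>1/3$ is indispensable is precisely that it makes the inner $t$-integral of order $s|y|^{-3}$, which matches the weight $b^{\leftarrow}(|y|)$ in \eqref{inv-function}; for $3\rho\le1$ this gain disappears.

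\emph{For (b)} I would fix $0<\epsilon<1$, put $c=\epsilon^{1/\rho}<1$, and note from \eqref{49} ($b(t)/t^{\rho}$ non-decreasing on $[0,t_0]$) that $b(ct)\le c^{\rho}b(t)=\epsilon\,b(t)$ for $0<t\le t_0$, hence $\overline{\Pi}(\epsilon\,b(t))\le\overline{\Pi}(b(ct))$ since $\overline{\Pi}$ is non-increasing. The substitution $u=ct$ then yields $\int_0^{t_0}\overline{\Pi}(\epsilon\,b(t))\,dt\le c^{-1}\int_0^{1}\overline{\Pi}(b(u))\,du<\infty$ by \eqref{3}, and the contribution from $[t_0,1]$ is at most $(1-t_0)\overline{\Pi}(\epsilon\,b(t_0))<\infty$, giving (b).

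\emph{For (c)} I would fix $0<\epsilon<1$, take $\delta_{\epsilon}$ from \eqref{50}, and set $\eta=b(\delta_{\epsilon})$. For $0<t\le\delta_{\epsilon}$ and $b(t)<|y|\le\eta$ one has $t<s:=b^{\leftarrow}(|y|)\le\delta_{\epsilon}$, so \eqref{50} gives $b(t)/t\le(1-\epsilon)^{-1}b(s)/s$, that is $\frac{t}{b(t)}|y|=\frac{t}{b(t)}b(s)\le(1-\epsilon)^{-1}b^{\leftarrow}(|y|)$. Splitting $\{b(t)<|y|\le1\}$ at $|y|=\eta$ I get, for small $t$,
\[
\frac{t}{b(t)}\int_{b(t)<|y|\le1}|y|\,\Pi(dy)\le\frac{1}{1-\epsilon}\int_{0<|y|\le\eta}b^{\leftarrow}(|y|)\,\Pi(dy)+\frac{t}{b(t)}\,\overline{\Pi}(\eta).
\]
Since $\overline{\Pi}(\eta)<\infty$ and $t/b(t)\to0$, the last term vanishes as $t\downarrow0$; then letting $\eta\downarrow0$ and using that $\int_{0<|y|\le\eta}b^{\leftarrow}(|y|)\,\Pi(dy)\to0$ (by \eqref{3}) finishes (c).
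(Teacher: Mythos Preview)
Your proof is correct. Parts (a) and (c) follow essentially the same route as the paper: Tonelli plus the power bound from \eqref{49} for (a), and a split of the range of $|y|$ combined with \eqref{50} and the integrability \eqref{inv-function} for (c). One small point in (c): you define $\eta=b(\delta_{\epsilon})$ and then write ``letting $\eta\downarrow 0$''; since $\eta$ is fixed once $\epsilon$ is fixed, what you mean (and what the paper does) is to split at an arbitrary $\eta'\in(0,\eta]$, take $\limsup_{t\to 0}$, and then send $\eta'\to 0$. This is a cosmetic fix.

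Part (b) is where you genuinely diverge from the paper. The paper deduces (b) from (a) via the third-moment Chebyshev bound
\[
\overline{\Pi}(\epsilon b(t))-\overline{\Pi}(b(t))\le \epsilon^{-3}\,\frac{1}{b(t)^3}\int_{0<|y|\le b(t)}|y|^3\,\Pi(dy),
\]
so that (b) becomes a corollary of (a) and \eqref{3}. Your argument is more direct: using only \eqref{49} you observe that $b(ct)\le \epsilon\, b(t)$ for $c=\epsilon^{1/\rho}$ and small $t$, hence $\overline{\Pi}(\epsilon b(t))\le \overline{\Pi}(b(ct))$, and a linear substitution reduces (b) to \eqref{3} itself. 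Your route is shorter and decouples (b) from (a); the paper's route, on the other hand, makes explicit that the same third-moment quantity controlling (a) also absorbs the gap between $\overline{\Pi}(\epsilon b(t))$ and $\overline{\Pi}(b(t))$, which is thematically consistent with how that quantity is used later in the exponential-inequality arguments.
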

\begin{proof} (a)
From (\ref{49}) it easily follows htat 
$$\frac{|y|}{b(t)} \le \frac{b^{\leftarrow}(|y|)^{\rho}}{t^{\rho}}, b^{\leftarrow}(|y|) \le t \le 1.$$
Since $\rho > 1/3$, we can conclude that
\begin{eqnarray*}
&&\int_0^{1} \frac{1}{b(t)^3}\int_{0 < |y| \le b(t)} |y|^3 \Pi(dy) dt 
= \int_{0 < |y| \le 1}\int_{b^{\leftarrow}(|y|)}^{1} \frac{1}{b(t)^3}dt |y|^3\Pi(dy)\\
&\le& \int_{0 < |y| \le 1}\int_{b^{\leftarrow}(|y|)}^{1}  t^{-3\rho} dt\, b^{\leftarrow}(|y|)^{3\rho} \Pi(dy)
\le\frac{1}{3 \rho-1}  \int_{0 < |y| \le 1} b^{\leftarrow}(|y|)\Pi(dy),
\end{eqnarray*}
where the last integral is finite by condition (\ref{inv-function}). Thus(a) holds.\\[.2cm]
(b) Note that
$$\overline{\Pi}(\epsilon b(t)) - \overline{\Pi}( b(t))=\int_{\epsilon b(t) \le  |y| < b(t)} \Pi(dy)
\le \epsilon^{-3} \frac{1}{b(t)^3}\int_{0 < |y| \le b(t)} |y|^3 \Pi(dy).$$
Combining this inequality with part (a), we see that (b) holds as well.\\[.2cm]
(c) Observe that
$$  t \left| \int_{b(t) < |y| \le 1} y \Pi(dy)\right|/ b(t) \le t  \int_{b(t) < |y| \le 1} |y| \Pi(dy)/b(t).$$
On account of condition (\ref{50})  we can find $0 < \overline{t} \le 1$  such that  
$$b(t_1)/b(t_2) \ge t_1/(2t_2), 0 \le t_1 \le t_2 \le \overline{t}.$$ 
It follows that
$$\frac{b(t)}{|y|} =\frac{b(t)}{b(b^{\leftarrow}(|y|)}\ge\frac{t}{2b^{\leftarrow}(|y|)}, b(t) \le |y| \le b(\overline{t})=:C'.$$
We can conclude that for any $0 < C < C',$
$$\frac{t}{b(t)}\int_{b(t) < |y| \le 1} |y| \Pi(dy) \le 2\int_{0 < |y| \le C} b^{\leftarrow}(|y|) \Pi(dy) + \frac{t}{b(t)} \int_{|y| > C} |y|\Pi(dy).$$ 
As (\ref{inv-function}) holds,
we can choose for $\epsilon >0$ a positive constant $C=C_{\epsilon} < C' $ so that the first integral will become less than $\epsilon$. \\On the other hand, the second integral is finite for any fixed $C >0$  (as we have $\int_{0  < |y| \le 1} |y|^2 \Pi(dy) < \infty$). Recalling that $t/b(t) \to 0$ as $t\to 0,$ we see that
$$\limsup_{t \to 0} \frac{t}{b(t)}  \int_{b(t) < |y| \le 1} |y| \Pi(dy) \le 2 \epsilon,$$
and part (c) of the lemma has been proven. \end{proof}
We next need a $d$-dimensional version of Lemma 4.3.(i) in \cite{Ber} which we prove for general L\'evy processes.
\begin{lem} \label{moment_3}
 Let $Y_t=(Y_{t,1},\ldots,Y_{t,d}), t \ge 0$ be a $d$-dimensional L\'evy process  with characteristic triplet $(\gamma, \Sigma, \Pi)$ and suppose that its L\'evy measure $\Pi$ has support $D$. Then all moments of $Y_t$ exist and we have $$\E|Y_t|^3/t \to \int |y|^3 \Pi(dy) \mbox{ as }t \to 0.$$
 
\end{lem}
\begin{proof} The existence of the moments follows, for instance, from Theorem 25.3 in \cite{Sato}.
To prove the other assertion of the lemma we first show  for any $x > 0$ satisfying $\Pi\{y: |y|=x\}=0,$ 
\be \label{vague}
\frac{1}{t}\PP\{|Y_t|  > x\} \to \overline{\Pi}(x) \mbox{ as } t \to 0.
\ee
We apply Corollary 8.9. in \cite{Sato}. Set for $x >0$ and $0< \epsilon <x$,
$$f_{x,\epsilon}= 1 \wedge \frac{\mathrm{dist}(\cdot, (x-\epsilon)D)}{\epsilon},$$
where, as usual, $\mathrm{dist}(y, A)=\inf\{|y-z|: z \in A\}$ is the distance of $y$ to the set $A \subset \R^d$. \\
It is easy to see that $f_{x, \epsilon}$ is continuous on $\R^d$  and  we have,
$$I_{(x-\epsilon)D} \le 1 - f_{x, \epsilon} \le I_{xD}.$$
The conclusion is that 
$$\limsup_{t \to 0}\frac{1}{t}\PP\{|Y_t| > x \}\le \limsup_{t \to 0} \frac{1}{t}\E  f_{x,\epsilon}(Y_t) =\int f_{x, \epsilon} d\Pi \le \overline{\Pi}(x-\epsilon).$$
Letting $\epsilon$ converge to zero, it follows that
$$\limsup_{t \to 0}\frac{1}{t}\PP\{|Y_t| > x \}\le \Pi\{y: |y| \ge x\}= \overline{\Pi}(x).$$
A similar argument gives that
$$\liminf_{t \to 0}\frac{1}{t}\PP\{|Y_t| > x \}\ge \overline{\Pi}(x)$$
and relation $(\ref{vague})$ has been proven.\\
After some calculation one obtains from Theorem 25.17 in \cite{Sato} for $1 \le i \le d,$
$$\E Y_{t,i}^2 = t (m_{2,i} + \Sigma_{i,i}) +t^2 \gamma_i^2$$
and
$$\E Y_{t,i}^4 =  m_{4,i}t + (3\{m_{2,i} + \Sigma_{i,i}\}^2 + 4 m_{3,i}\gamma_i)t^2+ 6(m_{2,i} +\Sigma_{i,i})\gamma_i^2 t^3 + \gamma_i^4 t^4 =: \sum_{j=1}^4 c_{j,i}t^j,$$
where  $m_{k,i}=\int y_i ^k \Pi(dy), k \ge 2.$
Arguing as on page 175 of \cite{Ber} we conclude that
\begin{eqnarray*}
\frac{1}{t}\PP\{|Y_t| > x \}&\le& \frac{1}{t}\sum_{i=1}^d \PP\{|Y_{t,i}| \ge x/\sqrt{d}\}\\
&\le&d x^{-2}\sum_{i=1}^d (m_{2,i}+\Sigma_{i,i}+t\gamma_i^2) I_{]0,1]}(x)\\
&&\hspace{2cm} + d^2 x^{-4}\sum_{i=1}^d  (m_{4,i} +\sum_{j=2}^4 c_{j,i}t^{j-1}) I_{]1,\infty[}(x),
\end{eqnarray*}
As there are at most countably many $x> 0$ for which $\Pi\{y: |y| =x\} > 0$, we have convergence almost everywhere in (\ref{vague}) so that we can apply the  dominated convergence theorem. We see  that as $t \to 0,$
$$\frac{1}{t} \E |Y_t|^3 = \int_0^{\infty} 3x^2\frac{1}{t}\PP\{|Y_t| > x \}dx \to \int |y|^3 \Pi(dy).$$
The lemma has been proven. \end{proof}
We return to the special case where the L\'evy  process $\{X_t: t \ge 0\}$ is purely non-Gaussian.  As in \cite{Ber, Sav}   we  then can write the stochastic process $X_t$ as a sum of two (independent) stochastic processes $Y_t^{(b)}$ and $Z_t^{(b)}$ plus a deterministic term $\nu(b)$. 
Letting $\Delta X_t = X_t - X_{t-}, t > 0$  be the jumps of $X_t,$ it follows from the L\'evy-It\^o decomposition that  for any $0 < b \le 1$,
\be \label{Ito}
X_t = t\nu(b) + Y_t^{(b)} + Z_t^{(b)}, t >0,
\ee
where
$$\nu(b)= \gamma - \int_{b < |y| \le 1} y \Pi(dy),$$
$$Y_t^{(b)}= \mbox{a.s. } \lim_{\epsilon \downarrow 0} \left\{ \sum_{0 < s \le t} \Delta X_s I_{\{\epsilon <|\Delta X_s| \le b\}} - t \int_{\epsilon < |y| \le b} y \Pi(dy)\right\}$$
and
$$Z_t^{(b)}=\sum_{0 < s \le t} \Delta X_s I_{\{|\Delta X_s| > b\}}.$$
As in  Lemma 4.1 of \cite{Sav} we see that under condition (\ref{3}) one has for any $0 < r <1,$
\be \label{trunc}
 \PP\left\{\sup_{0 \le s \le r^n} |Z_s^{(b(r^n))}|>0 \mbox{ i.o.}\right\}= 0.
 \ee
This will enable us to reduce the proof of Theorem \ref{dimension_d} to studying the processes
 $$Y_t^{(b(r^n))}, r^{n+1} < t \le r^n, n \ge 0.$$
We first look at the upper bound in Theorem \ref{dimension_d}. 
\subsection{The upper bound part}
Using  that  for any sequence $\{\xi_n: n \ge 1\}$ of i.i.d. mean zero random vectors with $\E |\xi_1|^2 < \infty$,
$$\E|\sum_{i=1}^n \xi_i| \le (\E |\sum_{i=1}^n \xi_i|^2)^{1/2}=\sqrt{n}(\E |\xi_1|^2)^{1/2}$$
we  can infer from  Theorem 3.1  in \cite{EL},
\begin{theo} Let $\xi_1,\ldots, \xi_n$ be i.i.d.  mean zero random vectors with finite third  moments. Then we have for any fixed $\delta >0$ and all $x >0,$
$$\PP\left\{\max_{1 \le k \le n} |\sum_{i=1}^n \xi_i| \ge 2\sqrt{n}(\E|\xi_1|^2)^{1/2} + x\right\}
\le \exp\left(- \frac{x^2}{(2+\delta)n\Lambda}\right) + C n \E |\xi_1|^3 /x^3,$$
where $\Lambda=\sup\left\{\E \langle z, \xi_1\rangle^2 : |z| \le 1 \right\}$ and $C$ is a positive constant depending on $\delta$.
 \end{theo}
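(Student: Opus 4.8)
The plan is to obtain the stated estimate as a direct corollary of Theorem 3.1 in \cite{EL}, which is the genuine probabilistic input. That theorem is a Fuk--Nagaev/Bernstein-type maximal inequality for i.i.d.\ mean-zero random vectors $\xi_1,\dots,\xi_n$ with $\E|\xi_1|^3<\infty$: writing $S_k:=\sum_{i=1}^k\xi_i$ (I read the maximum in the statement as $\max_{1\le k\le n}|S_k|$, the printed inner index $n$ being a typo for $k$), it provides, for every fixed $\delta>0$ and all $x>0$, a bound of the shape
\[
\PP\Big\{\max_{1\le k\le n}|S_k|\ \ge\ 2\,\E|S_n|+x\Big\}\ \le\ \exp\Big(-\frac{x^2}{(2+\delta)\,n\Lambda}\Big)+\frac{C\,n\,\E|\xi_1|^3}{x^3},
\]
with the same $\Lambda=\sup\{\E\langle z,\xi_1\rangle^2:|z|\le1\}$ and $C=C(\delta)$. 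Its proof is the usual truncation scheme: discard the summands of norm $\gtrsim x$, whose total contribution is controlled by a third-moment Markov bound and produces the term $Cn\E|\xi_1|^3/x^3$; apply a vector Bernstein inequality to the truncated, recentred sum, whose worst-direction variance is at most $n\Lambda$, which produces the exponential term with the harmless $(2+\delta)$-loss; and pass from $|S_n|$ to $\max_{k\le n}|S_k|$ by an Ottaviani/L\'evy-type maximal inequality.

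Granting this, the deduction is immediate. First I would record the elementary observation quoted just before the statement: by independence and $\E\xi_i=0$ one has $\E|S_n|^2=\sum_{i=1}^n\E|\xi_i|^2=n\,\E|\xi_1|^2$, so by the Cauchy--Schwarz inequality
\[
\E|S_n|\ \le\ \big(\E|S_n|^2\big)^{1/2}\ =\ \sqrt{n}\,(\E|\xi_1|^2)^{1/2},
\]
and hence $2\,\E|S_n|\le 2\sqrt{n}\,(\E|\xi_1|^2)^{1/2}$. (Note that $\E|\xi_1|^3<\infty$ forces $\E|\xi_1|^2<\infty$, so every quantity in sight is finite.) Consequently the threshold $2\sqrt{n}(\E|\xi_1|^2)^{1/2}+x$ appearing in the assertion is at least $2\,\E|S_n|+x$, so
\[
\Big\{\max_{1\le k\le n}|S_k|\ge 2\sqrt{n}(\E|\xi_1|^2)^{1/2}+x\Big\}\ \subseteq\ \Big\{\max_{1\le k\le n}|S_k|\ge 2\,\E|S_n|+x\Big\},
\]
and monotonicity of probability, combined with the displayed bound from \cite{EL}, yields exactly the claimed inequality.

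Thus there is no substantial obstacle here: all the real work lies inside Theorem 3.1 of \cite{EL}, and the only point requiring attention is bookkeeping — checking that the cited theorem is indeed stated with the maximum over $k\le n$ already incorporated and with a centering term that is dominated by $2\sqrt n\,(\E|\xi_1|^2)^{1/2}$ (most naturally $2\,\E|S_n|$), so that enlarging the centering as above costs nothing. Should the version in \cite{EL} only be stated for $k=n$, the passage to the running maximum is a standard addition via symmetrization and L\'evy's inequality (or directly via Ottaviani's inequality), at the price of an inconsequential adjustment of the constant $C$.
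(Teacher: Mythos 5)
Your proposal matches the paper's derivation exactly: the paper also deduces the stated bound by combining the observation $\E|S_n|\le(\E|S_n|^2)^{1/2}=\sqrt n\,(\E|\xi_1|^2)^{1/2}$ with Theorem 3.1 of \cite{EL}, whose centering term $2\E|S_n|$ is thereby replaced by the larger $2\sqrt n\,(\E|\xi_1|^2)^{1/2}$. Your reading of the inner summation index as $k$ (so the event is $\{\max_{1\le k\le n}|S_k|\ge\cdots\}$) is also the intended one.
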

This implies the following inequality for the L\'evy process $\{Y_s^{(b)}, s \ge 0\},$ 
\begin{theo}\label{FNI}
 Given $\delta >0$ we have for all $t, x >0,$
\begin{eqnarray*}
&&\PP\left\{\sup_{0 \le s\le t} | Y_s^{(b)}| \ge 2(\E|Y_t^{(b)}|^2)^{1/2} + x\right\}\\
&\le& \exp\left(- \frac{x^2}{(2+\delta)tV(b)}\right) + C t \int_{0 <|y| \le b} |y|^3 \Pi(dy) /x^3,
\end{eqnarray*}
where $C >0$ is a constant depending on $\delta.$
 \end{theo}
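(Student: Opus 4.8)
The plan is to reduce to the exponential inequality for i.i.d.\ sums stated just above, by discretizing the path of $\{Y^{(b)}_s : 0\le s\le t\}$ over a uniform grid and then letting the mesh go to zero. Fix $n\ge 1$, set $s_k=kt/n$ for $0\le k\le n$ and $\xi_k:=Y^{(b)}_{s_k}-Y^{(b)}_{s_{k-1}}$ for $1\le k\le n$. By stationarity and independence of L\'evy increments the $\xi_k$ are i.i.d.\ copies of $Y^{(b)}_{t/n}$; since $Y^{(b)}$ is the compensated small--jump process of the L\'evy--It\^o decomposition it is a martingale, whence $\E\xi_1=0$; and since the L\'evy measure of $Y^{(b)}$ is the restriction of $\Pi$ to $\{0<|y|\le b\}\subset D$, Lemma \ref{moment_3} applies to $Y^{(b)}$ and shows all moments of $\xi_1$ are finite, in particular $\E|\xi_1|^3<\infty$. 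Moreover $\sum_{i=1}^k\xi_i=Y^{(b)}_{s_k}$, so $\max_{1\le k\le n}\bigl|\sum_{i=1}^k\xi_i\bigr|=\max_{0\le k\le n}|Y^{(b)}_{s_k}|$.

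I would then identify the constants occurring in the i.i.d.\ inequality. Arguing exactly as in the proof of Lemma \ref{moment_3}, and using that $Y^{(b)}$ is purely non--Gaussian with zero drift, one gets $\E\langle z,Y^{(b)}_s\rangle^2=s\int_{|y|\le b}\langle z,y\rangle^2\,\Pi(dy)$ for all $z\in\R^d$ and $s>0$; in particular the second moments of $Y^{(b)}$ are linear in time. Taking $z=e_1,\dots,e_d$ and summing gives $\E|Y^{(b)}_{t/n}|^2=\tfrac1n\E|Y^{(b)}_t|^2$, hence $\sqrt n\,(\E|\xi_1|^2)^{1/2}=(\E|Y^{(b)}_t|^2)^{1/2}$, and taking the supremum over $|z|\le 1$ gives $n\Lambda=n\sup_{|z|\le1}\E\langle z,\xi_1\rangle^2=t\sup_{|z|\le1}\int_{|y|\le b}\langle z,y\rangle^2\Pi(dy)=tV(b)$. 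Substituting these into the i.i.d.\ inequality yields, for every $n\ge1$,
\[
\PP\Bigl\{\max_{1\le k\le n}|Y^{(b)}_{kt/n}|\ge 2\bigl(\E|Y^{(b)}_t|^2\bigr)^{1/2}+x\Bigr\}\le\exp\Bigl(-\frac{x^2}{(2+\delta)tV(b)}\Bigr)+\frac{C\,n\,\E|Y^{(b)}_{t/n}|^3}{x^3}.
\]

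To conclude I would let $n\to\infty$ along the nested grids $n=2^m$, whose union is dense in $[0,t]$ and contains the endpoint $t$. Since $s\mapsto Y^{(b)}_s$ is right--continuous with left limits, the supremum of $|Y^{(b)}_s|$ over $[0,t]$ agrees with its supremum over this dense set, so $\max_{0\le k\le 2^m}|Y^{(b)}_{kt/2^m}|\uparrow\sup_{0\le s\le t}|Y^{(b)}_s|$ almost surely; by continuity of $\PP$ along the increasing events this gives the displayed bound with $\sup_{0\le s\le t}|Y^{(b)}_s|$ in place of $\max_{1\le k\le n}|Y^{(b)}_{kt/n}|$ and with $>$ in place of $\ge$, and applying that version with $x-\eta$ instead of $x$ and letting $\eta\downarrow 0$ then restores the non--strict inequality asked for. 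Meanwhile, on the right--hand side $n\,\E|Y^{(b)}_{t/n}|^3=t\cdot\bigl(\E|Y^{(b)}_{t/n}|^3/(t/n)\bigr)\to t\int_{0<|y|\le b}|y|^3\,\Pi(dy)$ by Lemma \ref{moment_3}, which produces exactly the asserted inequality. I do not anticipate a genuine obstacle: this is a routine passage from discrete to continuous time, the only delicate points being the exact matching of the constants $\E|Y^{(b)}_t|^2$, $V(b)$ and the limiting third moment with those in the statement (which rests on the time--linearity of the second moments and on Lemma \ref{moment_3}) and the path--regularity approximation of $\sup_{0\le s\le t}|Y^{(b)}_s|$ by finite--grid maxima.
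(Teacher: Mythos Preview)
Your proposal is correct and follows essentially the same route as the paper: discretize $\{Y^{(b)}_s:0\le s\le t\}$ on the grid $kt/n$, apply the i.i.d.\ exponential inequality to the increments after identifying $n\E|\xi_1|^2=\E|Y^{(b)}_t|^2$ and $n\Lambda=tV(b)$, and pass to the limit using Lemma~\ref{moment_3} for the third-moment term. Your treatment is in fact slightly more careful than the paper's, which simply invokes right continuity to equate the supremum probability with $\lim_{n\to\infty}p_n$ without restricting to nested grids or discussing the strict/non-strict inequality.
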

 \begin{proof}  By the right continuity of the sample paths $s \mapsto Y_s^{(b)},$ the above probability is
 equal to
 $$
 \lim_{n \to \infty} \PP\left\{\max_{1 \le k\le n} | Y_{tk/n}^{(b)}| \ge 2(\E|Y_t^{(b)}|^2)^{1/2} + x\right\}=:\lim_{n \to \infty} p_n.$$
 Note that $$Y_{tk/n}^{(b)}=\sum_{j=1}^k (Y_{tj/n}^{(b)}-Y_{t(j-1)/n}^{(b)}), 1 \le k \le n,$$
 where the random vectors $\xi_{j,n,b}:=Y_{tj/n}^{(b)}-Y_{t(j-1)/n}^{(b)}, 1 \le j \le n$ are i.i.d.\\
 Moreover we have
$$\E \xi_{1,n,b}=\E Y_{t/n}^{(b)}=0 \mbox{ and }n \E |\xi_{1,n,b}|^2 = \E{ |Y^{(b)}_t|}^2,$$
By Example 25.12 in \cite{Sato} we also know that
$$\E\langle z, \xi_{1,n,b}\rangle^2 = (t/n)\int _{|y| \le b}\langle z, y\rangle^2\Pi(dy), z \in \R^d.$$
We can infer that $\Lambda$ in the above inequality is equal to $ V(b) t/n.$\\
It follows that
$$p_n
\le \exp\left(- \frac{x^2}{(2+\delta)tV(b)}\right) + Ct \frac{n}{t} \E | Y_{t/n}^{(b)}|^3 /x^3.$$
Letting $n$ go to infinity and recalling Lemma \ref{moment_3}, the inequality follows. \end{proof}
Note also that 
\be \label{sec-mom}
\E|Y_t^{(b)}|^2 \le d tV(b), t \ge 0, 0 < b  \le 1.
\ee
We are ready to establish the upper bound in Theorem \ref{dimension_d}, that is, 
\be \label{upper1}
\limsup_{t \downarrow 0} \frac{|X_t|}{b(t)} \le \alpha_0 \mbox{ with prob. 1}
\ee
where w.l.o.g we can and do assume that $\alpha_0 < \infty$. \\
By definition of $\alpha_0$ we have for any $\alpha > \alpha_0,$
\begin{eqnarray}
\infty &>& \sum_{n=0}^{\infty}\int_{r^{n+1}}^{r^n} t^{-1}\exp\left(-\frac{\alpha^2 b^2(t)}{2t V(b(t))}\right)dt  \label{series1}\\
&\ge&\log(r^{-1})\sum_{n=0}^{\infty}  \exp\left(-\frac{\alpha^2 b^2(r^n)}{2r^{n+1}V(b(r^{n+1}))}\right).\nonumber
\end{eqnarray}
In particular, we have $r^n V(b(r^n))/ b^2(r^{n-1}) \to 0$ as $n \to \infty$.\\ From this observation it further follows (see \ref{sec-mom}) that
\be
c_n:=\E \left|Y_{r^n}^{(b(r^n))}\right|^2 = o(b^2(r^{n-1})) \mbox{ as }n \to \infty.
\ee
Let $\delta >0$. Then it follows from Theorem \ref{FNI} that for large $n$ 
\begin{eqnarray*}
&&\PP\left\{\sup_{0 \le s \le r^n} \left|Y_{s}^{(b(r^n))}\right| \ge (1+ \delta)( \alpha_0 + \delta)  b(r^{n-1}) \right\}\\
&\le& \PP\left\{\sup_{0 \le s \le r^n} \left|Y_{s}^{(b(r^n))}\right| \ge (\alpha_0+\delta)(1+\delta/2) b(r^{n-1}) + 2c_n^{1/2} \right\}\\
&\le&\exp\left(-\frac{(\alpha_0+\delta)^2 b^2(r^{n-1})}{2r^n V(b(r^n))}\right) + C' r^n \int_{0 < |y| \le b(r^{n})} |y|^3 \Pi(dy)/b^3(r^{n-1}),
\end{eqnarray*}
where $C' >0$ is a constant depending on $\delta.$\\
From relation (\ref{series1}) it follows that
$$\sum_{n=1}^{\infty} \exp\left(-\frac{(\alpha_0+\delta)^2 b^2(r^{n-1})}{2r^n V(b(r^n))}\right) < \infty.$$
Moreover employing the same argument already used for proving (\ref{series1}), we can infer from Lemma \ref{cond}(a) that
$$\sum_{n=1}^{\infty} r^n \int_{0 < |y| \le b(r^{n})} |y|^3 \Pi(dy)/b^3(r^{n-1}) < \infty.$$
By the Borel-Cantelli lemma we then have for any $\delta >0$  with probability one,
$$\sup_{r^{n+1} < s \le r^n} \left|Y_{s}^{(b(r^n))}\right| \le (1+ \delta)( \alpha_0 + \delta)  b(r^{n-1}), \mbox{ eventually}$$ 
Combining this with Lemma \ref{cond}(c)  and relation (\ref{trunc}), we see that  with probability one,
$$\limsup_{t \to 0} \frac{|X_t|}{b(t/r^2)} \le \alpha_0.$$
From condition (\ref{50})   it  follows that   $\limsup_{t \to 0}b(t/r^2)/b(t) \le r^{-2}$ for any fixed $0 < r < 1.$ Thus, we have with with probability one,
$$\limsup_{t \to 0} \frac{|X_t|}{b(t)} \le \alpha_0 r^{-2}.$$
Since this holds for any $r < 1$, relation (\ref{upper1}) follows.

\subsection{The lower bound part}
W.l.o.g. we assume $\alpha_0 > 0.$ We show that  for any $ 0 < \alpha < \alpha_0$ with probability 1,
\be \label{lower_bound}
\limsup_{t \to 0} |X_t| / b(t)\ge \alpha.
\ee
If $\int_0^1 \overline{\Pi} (b(t)) dt = \infty,$ it follows from Lemma \ref{cond}(b) that we have 
$$\int_0^1 \overline{\Pi} (Cb(t)) dt = \infty,\forall\, C > 0$$
and we can infer from Proposition 4.2 in \cite{Ber} that with probability one $\limsup_{t \to 0} |X_t| / b(t)=\infty$ and (\ref{lower_bound}) is trivially true.\\[.1cm]
This is also the case if $\limsup_{t \to 0} \PP\{|X_t| \ge \alpha b(t)\} >0.$ For this implies that
$$\PP\{\limsup_{t \to 0} |X_t|/b(t) \ge \alpha\} \ge \limsup_{t \to 0} \PP\{|X_t| \ge \alpha b(t)\} >0.$$
Then the first probability has to be equal to 1 by  Blumenthal's 0-1 law (see, \cite{Sato}, Proposition 40.4) and (\ref{lower_bound}) holds.\\[.1cm]
So it is enough to prove (\ref{lower_bound}) under the assumptions (\ref{3}) and
\be  \label{59}
\PP\{|X_t| \ge \alpha b(t)\}  \to 0 \mbox{ as } t \to 0.
\ee
To that end we first show for $\alpha < \alpha_0,$
\be \label{diverg}
\sum_{n=0}^{\infty} \PP\{|X_{r^n}| \ge \alpha b(r^n)\} = \infty,
\ee
where $0 < r < 1$ has to be chosen so that $\alpha/r < \alpha_0.$\\
Using the same argument as in the proof of relation (\ref{series1}), we find that for any $0< \tilde{\alpha} < \alpha_0,$
$$\sum_{n=0}^{\infty} \exp\left(-\frac{\tilde{\alpha}^2 b^2(r^{n+1})}{2r^{n}V(b(r^{n}))}\right)=\infty.$$
As we have by condition (\ref{50})  for $0 < \epsilon <1,$ $b(r^{n+1})\ge r b(r^n) (1-\epsilon)$ if $n$ is large, we can conclude that for $0 < r_1 <r,$
\be \label{series2}
\sum_{n=0}^{\infty} \exp\left(-\frac{\tilde{\alpha}^2 r_1^2 b^2(r^{n})}{2r^{n}V(b(r^n))}\right)=\infty.
\ee
Let $\alpha < \alpha_1 < \alpha_0$ be chosen so that we still  have $\alpha_1/r < \alpha_0.$ Then there exist an $r_1$ satisfying $0 < r_1 < r$  and $\delta > 0$ small so that $\tilde{\alpha}:=\alpha_1(1+\delta)/r_1 < \alpha_0.$ 
It follows  from (\ref{series2}) that
\be \label{series3}
\sum_{n=0}^{\infty} \exp\left(-\frac{\alpha_1^2 (1 +\delta)^2  b^2(r^{n})}{2r^{n}V(b(r^n))}\right)=\infty.
\ee
Next observe that
\begin{equation*}
\begin{split}
 \PP\{|X_{r^n}| \ge \alpha b(r^n)\} \ge &\; \PP\{|Y_{r^n}^{(b(r^n))}| \ge \alpha_1 b(r^n)\}\\
 &\quad - \PP\{|Z_{r^n}^{(b(r^n))} + r^n \nu(b(r^n))| \ge (\alpha_1-\alpha) b(r^n)\}.
\end{split}
\end{equation*}
From Lemma \ref{cond}(c) in combination with relation (\ref{trunc}), we readily obtain that
$$\sum_{n=0}^{\infty} \PP\{|Z_{r^n}^{(b(r^n))} + r^n \nu(b(r^n))| \ge (\alpha_1-\alpha) b(r^n)\} < \infty.$$
Therefore, (\ref{diverg}) is proven once we have shown that
\be \label{diverg1}
\sum_{n=0}^{\infty} \PP\{|Y_{r^n}^{(b(r^n))}| \ge \alpha_1 b(r^n)\}=\infty.
\ee
To establish this relation, we first derive an inequality which gives lower bounds for the probabilities $\PP\{|Y_t^{(b)}| \ge x\}, x > 0.$
\begin{lem} \label{lowerboundsum}
 Let $ \xi_1, \ldots, \xi_n$ be i.i.d. mean zero random vectors with finite third  moments.
 Then we have for any $0 < \delta <1$ and all $x >0,$
 $$\PP\{|\sum_{j=1}^n \xi_j| \ge x\} \ge C_1\exp\left(-\frac{x^2(1+\delta)^2}{2n\Lambda}\right) - C_2n \E\ |\xi_1|^3/x^3,$$
 where $\Lambda =\sup\{\E \langle \xi_1, z \rangle^2 : |z| \le 1\}$ and $C_i, i=1,2 $ are positive constants depending on $\delta$ only.
\end{lem}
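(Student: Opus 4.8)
\emph{Proof strategy.} The plan is to reduce the multivariate bound to a one–dimensional moderate–deviation lower bound, just as Theorem \ref{FNI} was reduced to its scalar counterpart, to dispose of all ``large'' deviations by a triviality argument, and then to handle the remaining moderate range by a Cram\'er–type estimate. Since $\E|\xi_1|^2<\infty$, the quadratic form $z\mapsto\E\langle\xi_1,z\rangle^2$ is continuous on the compact unit ball and attains its supremum $\Lambda$ at some unit vector $z_0$. Put $\eta_j:=\langle\xi_j,z_0\rangle$; these are i.i.d., mean zero, with $\E\eta_1^2=\Lambda$, and $\E|\eta_1|^3\le\E|\xi_1|^3$ by the Cauchy--Schwarz inequality, while $|\sum_{j\le n}\xi_j|\ge|\sum_{j\le n}\eta_j|$. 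Hence it suffices to show that for i.i.d. real mean–zero $\eta_j$ with $\sigma^2:=\E\eta_1^2$ and $\rho:=\E|\eta_1|^3<\infty$ one has
$$\PP\Big\{\Big|\sum_{j=1}^n\eta_j\Big|\ge x\Big\}\ \ge\ c_1\exp\!\Big(-\frac{x^2(1+\delta)^2}{2n\sigma^2}\Big)-c_2\,\frac{n\rho}{x^3}$$
with $c_i=c_i(\delta)>0$; the lemma then follows with $C_i=c_i$, since $\rho\le\E|\xi_1|^3$ and $\sigma^2=\Lambda$.

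Write $w:=x^2/(n\sigma^2)$. If $n\rho/x^3\ge\varepsilon_0$ for a small constant $\varepsilon_0=\varepsilon_0(\delta)$, then choosing $c_2\ge c_1/\varepsilon_0$ makes the right–hand side $\le 0$ and there is nothing to prove. By Jensen's inequality $\rho\ge\sigma^3$, so $n\rho/x^3\ge w^{-3/2}n^{-1/2}$, and the right–hand side is therefore also $\le 0$ whenever $w^{3/2}\exp(-w(1+\delta)^2/2)\le (c_2/c_1)n^{-1/2}$. The function $w\mapsto w^{3/2}\exp(-w(1+\delta)^2/2)$ is bounded and tends to $0$ as $w\to\infty$, so this holds for \emph{all} $w$ once $n\le N_0(\delta)$, and for $n>N_0(\delta)$ it holds as soon as $w\ge w_+(n)$ with $w_+(n)=O(\log n)$. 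Hence only the range
$$n>N_0(\delta),\qquad w<w_+(n)=O(\log n),\qquad n\rho/x^3<\varepsilon_0$$
remains, and in this range Jensen gives $x/\sigma\ge(n/\varepsilon_0)^{1/3}$, which is large provided $N_0$ (equivalently $c_2$) is chosen large.

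In this remaining range the normalised deviation $t:=\sqrt w=x/(\sigma\sqrt n)$ satisfies $t=O(\sqrt{\log n})=o(n^{1/6})$, i.e. we sit inside the classical moderate–deviation zone. Truncating $\eta_j$ at a suitable level (a fixed multiple of $x$) to make the moment generating function finite and to keep the discarded probability at most $c\,n\rho/x^3$, recentering, and checking that the truncation perturbs both $\sigma^2$ and the mean only slightly — this uses $x/\sigma$ large and $w=O(\log n)$ — one obtains from the exponential Chebyshev/Cram\'er lower bound for sums of bounded independent variables, whose rate function near the origin is $\tfrac{a^2}{2\sigma^2}(1+o(1))$, an estimate of the form $\PP\{\sum_{j\le n}\eta_j\ge x\}\ge c(\delta)\,\overline{\Phi}\big(t(1+\delta/3)\big)-c\,n\rho/x^3$. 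One then finishes with the elementary inequality $\overline{\Phi}(u)\ge\kappa(\delta)\exp(-u^2(1+\delta)^2/2)$, valid for all $u\ge 0$ because the ratio is continuous, positive at $0$, and tends to $\infty$ as $u\to\infty$; this converts the Gaussian tail into $\kappa(\delta)\exp(-x^2(1+\delta)^2/(2n\sigma^2))$, and collecting constants gives the claim.

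The main obstacle is this last step: producing the Gaussian–type lower bound with the sharp exponent $(1+\delta)^2$ and with the error held down to exactly $n\rho/x^3$ \emph{uniformly} over the whole admissible range $w\le w_+(n)$. A plain Berry--Esseen bound only yields an error of order $\rho/(\sigma^3\sqrt n)=w^{-3/2}\cdot(n\rho/x^3)$, which is too large by the factor $w^{3/2}$ in the genuinely moderate regime $\sigma\sqrt n\ll x\ll\sigma\sqrt{n\log n}$, so one really must use the moderate–deviation refinement (equivalently, truncate and expand the Cram\'er rate function), and the careful bookkeeping of the truncation level, of the recentering shift (which is $\lesssim x\cdot(n\rho/x^3)$ and so is absorbed by the $(1+\delta)$–slack), and of the variance defect is where the work lies. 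Alternatively, the reduction in the first paragraph reduces everything to the one–dimensional lower bound already available for random walks in \cite{EL}.
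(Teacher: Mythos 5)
Your opening reduction to one dimension is correct and is essentially the paper's argument run in the opposite order: the paper first replaces the sum $\sum\xi_j$ in $\R^d$ by a Gaussian sum $\sum\eta_j$ with the same covariance and then projects onto a direction $z$ achieving $\Lambda$, whereas you project onto $z_0$ first and then want to Gaussianize the resulting one-dimensional sum. Both orderings are admissible here. You also correctly diagnose the obstacle: plain Berry--Esseen gives an error of order $\E|\eta_1|^3/(\sigma^3\sqrt n)$, which exceeds $n\E|\eta_1|^3/x^3$ by a factor $w^{3/2}=\big(x^2/(n\sigma^2)\big)^{3/2}$, so a nonuniform (tail-weighted) Gaussian approximation is indispensable.

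The genuine gap is in how you propose to supply that nonuniform bound. The paper does it by quoting Lemma 5 of \cite{E-3}, a Hilbert-space Gaussian comparison inequality of the form
\[
\PP\Bigl\{\bigl|\textstyle\sum_{j\le n}\xi_j\bigr|\ge s\Bigr\}\ \ge\ \PP\Bigl\{\bigl|\textstyle\sum_{j\le n}\eta_j\bigr|\ge s+t\Bigr\}\ -\ C\,\frac{n\,\E|\xi_1|^3}{t^3},
\]
applied with $s=x$, $t=x\delta/2$, which in one step yields exactly the $(1+\delta/2)$-shift and the $n\E|\xi_1|^3/x^3$ error, after which it only remains to project and invoke the elementary Gaussian lower tail (\ref{normal_lower}). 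You instead sketch a truncation-plus-Cram\'er argument: truncate at a multiple of $x$, recenter, control the variance defect, apply an exponential Chebyshev lower bound, and carry the truncation cost as $O(n\rho/x^3)$. This route is plausible, but you openly leave all of its quantitative content undone — ``the careful bookkeeping \ldots is where the work lies'' — and that bookkeeping is precisely the nontrivial part of the lemma: one must show the recentering shift is $\lesssim x\cdot(n\rho/x^3)$ and is absorbed by the $(1+\delta)$-slack, that the variance loss from truncation is also absorbed, and that the Cram\'er rate function matches $a^2/(2\sigma^2)$ to within the $(1+\delta)^2$ factor \emph{uniformly} over the entire range $1\lesssim w\lesssim\log n$. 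As written this is a program, not a proof. Your closing remark that the needed one-dimensional lower bound is ``already available in \cite{EL}'' is also unverified; the paper's source is Lemma 5 of \cite{E-3}, not \cite{EL}. If you want to follow your route, the clean fix is to cite (or prove) a one-dimensional nonuniform Gaussian comparison inequality with error $O(n\E|\eta_1|^3/x^3)$ in place of the moderate-deviations sketch; otherwise you should fall back on the paper's use of \cite{E-3}, Lemma 5.
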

 \begin{proof} Applying Lemma 5 in \cite{E-3} with $s=x$ and $t=x\delta/2$, we find that
 $$
 \PP\{|\sum_{j=1}^n \xi_j| \ge x\} \ge \PP\{|\sum_{j=1}^n \eta_j| \ge x(1+\delta/2)\} -
 8A\delta^{-3} n \E |X_1|^3 x^{-3},
 $$
 where the random vectors  $\eta_1,\ldots,\eta_n$ are i.i.d.  with $\mathcal{N}(0, \mathrm{cov}(\xi_1))$-distribution, $\mathrm{cov}(\xi_1)$ is the covariance matrix of $\xi_1$ and $A$ is an absolute constant.\\
 Next choose a unit vector $z \in \R^d$ so that $\Lambda = \E \langle \xi_1, z \rangle^2 = \E \langle \eta_1, z \rangle^2$.\\ Then we have
 \begin{eqnarray*}
\PP\{|\sum_{j=1}^n \eta_j| \ge x(1+\delta/2)\}&\ge &\PP\{|\sum_{j=1}^n \langle \eta_j, z \rangle| \ge x(1+\delta/2)\}\\
&=&\PP\{\sqrt{n} |\eta'| \ge x(1+\delta/2)/\Lambda^{1/2}\}
\end{eqnarray*}
 where $\eta'$ is  a 1-dimensional standard normal random variable. Using the inequality 
 \be \label{normal_lower}
\PP\{\eta' \ge x\} \ge x (x^2 +1)^{-1}\exp(-x^2/2)/\sqrt{2\pi}, x > 0,
 \ee
  we easily obtain the above lower bound. \end{proof}
 By the same reasoning as in the proof of Theorem \ref{FNI} we obtain the following result for the L\'evy processes $\{Y_t^{(b)}: t \ge 0\}$ from Lemma \ref{lowerboundsum},
 \begin{theo}\label{FNI-reversed}
 Given $\delta >0$ we have for all $t, x >0,$
$$\PP\left\{ | Y_t^{(b)}| \ge  x\right\}
\ge C_1\exp\left(- \frac{x^2(1+\delta)^2}{2tV(b)}\right) -C_2 t \int_{0 <|y| \le b} |y|^3 \Pi(dy) /x^3,$$
where $C_1, C_2 >0$ are constant depending on $\delta$ only.
\end{theo}
Recalling Lemma \ref{cond}(a) and (\ref{series3}), we now see that
\begin{eqnarray*}
\sum_{n=0}^{\infty} \PP\{|Y_{r^n}^{(b(r^n))}| \ge \alpha_1 b(r^n)\} 
&\ge& C_1\sum_{n=0}^{\infty} \exp\left(-\frac{\alpha_1^2 (1 +\delta)^2  b^2(r^{n})}{2r^{n}V(b(r^n))}
\right)\\
 &-& C_2\alpha_1^{-3} \sum_{n=0}^{\infty} r^n \int_{ |y| \le b(r^n)} |y|^3 \Pi(dy)/b^3(r^n)=\infty.
\end{eqnarray*}
This shows that (\ref{diverg1}) holds and we thus have proven (\ref{diverg}).\\[.1cm]
As in \cite{Sav} (see formula (4.7)) we can infer from (\ref{diverg}) that for any fixed natural number $m \ge 1,$ there exists a $k \in \{0,\ldots, m-1\}$ such that
\be \label{diverg5}
\sum_{n=0}^{\infty} \PP\{|X_{r^{nm+k}}| \ge \alpha b(r^{nm+k})\} =\infty.
\ee
Recall that $0 < r <1$ had to be chosen so that $\alpha/r < \alpha_0$. We will assume that $r^m < 1/2$ which holds if $m$ is bigger than some finite positive number $m_0=m_0(r).$\\
Further set $t_n= r^{nm+k}/(1-r^m), n \ge 0$ which implies that $t_n-t_{n+1}=r^{nm+k}$ and also
$t_{n+1} \le (1-r^m)t_n.$\\ Let $0 <\delta  <1$ be fixed. Consider for $n \ge 1$ the events
$$A_n:= \{|X_{t_n} - X_{t_{n+1}}| \ge \alpha b(t_n(1-r^m))\}, B_n:=\{|X_{t_{n+1}}| \le \delta b(t_n(1-r^m))\}.$$
Then we have by condition (\ref{49}), $b(t_n(1-r^m))/b(t_{n+1}) \ge (r^{-m}/2)^{-\rho}$ for large $n$. Choosing $m \ge m_1$ for a suitable number $m_1=m_1(r, \alpha, \delta, \rho) \ge m_0$ , it follows that for large $n,$
$$\PP(B_n^c) \le \PP\{|X_{t_{n+1}}| \ge \alpha b(t_{n+1})\},$$
which converges to zero by  (\ref{59}).\\
The events $A_n$ are independent and we have by (\ref{diverg5})
$$\sum_{n=1}^{\infty} \PP(A_n) = \infty.$$
Moreover, $A_1,\ldots, A_n, B_n$ are independent for any $n \ge 1$. Thus, we can infer from the Feller-Chung lemma (see Lemma 3(i) on page 70 in \cite{CT}) that
$$\PP(A_n \cap B_n \mbox{ infinitely often}) = \PP(A_n  \mbox{ infinitely often}) =1.$$
Finally, note that
$$\{A_n \cap B_n \mbox{ infinitely often}\} \subset \{\limsup_{t \to 0}|X_t|/b(t(1-r^m)) \ge \alpha - \delta\}.$$
In view of condition (\ref{50}), we have $\liminf_{t \to 0} b(t(1-r^m))/b(t) \ge 1 -r^m$ which is $ \ge (1-\delta)$ if $m$ is big enough.
We now can conclude that with probability one,
$$\limsup_{t \to 0}|X_t|/b(t) \ge (1-\delta)(\alpha - \delta).$$
As $\delta$ can be chosen arbitrarily small, we see that (\ref{lower_bound}) holds. Theorem \ref{dimension_d} has been proven. \qed\\[.2cm]
To conclude this section we show that Theorem \ref{dimension_d} remains true if we replace the function $V$ by $V_1$. 
To prove this it is enough to show that $\alpha_1=\alpha_0,$ where
\be  \label{V_1}
\alpha_1: = \sup\left\{\alpha \ge 0: \int_0^1 \frac{1}{t} \;\exp\left(-\frac{\alpha^2 b^2(t)}{2t V_1(b(t))}\right) dt = \infty\right\}.
\ee
It is obvious  that $\alpha_0 \le \alpha_1$ (since $V(b(t)) \le V_1(b(t))$).\\
For the reverse inequality we can assume w.l.o.g. that $\alpha_0 < \infty.$\\
Observe that we have 
for any  $t>0,$
\begin{eqnarray*}
V_1(t)=\sup_{|z| =1}\int_{|\langle y, z\rangle| \le t} \langle y, z\rangle^2 \Pi(dy) &\le& 
V(t) + \sup_{|z| \le 1} \int_{\{|y| > t, |\langle y, z\rangle| \le t\}} \langle y, z\rangle^2 \Pi(dy)\\
&\le&V(t)+t^2\overline{\Pi}(t).
\end{eqnarray*}
Given $0 < \delta <1$, set  $\alpha = (1+\delta)( \alpha_0 + \delta)$. Then we have 
\begin{eqnarray*}
&&\int_0^1 \frac{1}{t} \;\exp\left(-\frac{\alpha^2 b^2(t)}{2t V_1(b(t))}\right) dt \\
&\le& \int_0^1 \frac{1}{t} \;\exp\left(-\frac{\alpha^2 b^2(t)}{2t (V(b(t))+ b^2(t)\overline{\Pi}(b(t)))}\right) dt \\
&\le& \int_0^1 \frac{1}{t} \;\exp\left(-\frac{\alpha^2 b^2(t)}{2(1+\delta)t V(b(t))}\right) dt\\ 
&&\hspace{2cm} + \int_0^1 \frac{1}{t} \;\exp\left(-\frac{\alpha^2 b^2(t)}{2t(1+\delta^{-1})b^2(t)\overline{\Pi}(b(t))) }\right) dt \\
&\le& \int_0^1 \frac{1}{t} \;\exp\left(-\frac{(\alpha_0+\delta)^2  b^2(t)}{2t V(b(t))}\right) dt 
+ \frac{2(1+\delta^{-1})}{\alpha^2}\int_0^1 \overline{\Pi}(b(t))dt < \infty,
\end{eqnarray*}
where we have used in the last step the  inequality $\exp(-x) \le x^{-1}, x > 0.$\\
We see that $\alpha_1 \le (\alpha_0 + \delta) (1 + \delta), \delta > 0$ and thus $\alpha_0 \ge \alpha_1.$

 \section{Proofs of Theorems \ref{upper}, \ref{lower} and Lemma \ref{extra-lemma}}
 To simplify our notation, we set for any $\alpha \ge 0,$
 $$J(\alpha) := \int_0^1 \frac{1}{t} \;\exp\left(-\frac{\alpha^2 b^2(t)}{2t V(b(t))}\right) dt.$$
 Note that $J(\alpha)$ is finite if there exists a $0 < u_0 <1$ such that the integral over $[0,u_0]$ is finite. \\[.2cm]
{\it Proof of Theorem \ref{upper}.}\\
W.l.o.g. we assume that $\lambda < \infty.$ In view of Theorem \ref{dimension_d} it is enough to show that $J(\lambda + \delta) < \infty, \delta > 0.$ \\Choosing $0 < t_{\delta} < e^{-1}$ small enough so that  
$$b(t) = \sqrt{t \log \log 1/t}/h(1/t), 0 < t \le t_{\delta}$$ 
and 
$$V(b(t)) \le \frac{(\lambda + \delta/2)^2}{2} h^{-2}(1/t), 0 < t < t_{\delta},$$
we readily obtain that
$$\int_0^{t_{\delta}} \exp(-(\lambda +\delta)^2 b^2(t)/(2t V(b(t)))t^{-1}dt
\le \int_0^{t_{\delta}} t^{-1} (\log 1/t )^{-(1+\epsilon)^2} dt $$
where $\epsilon = (\lambda+\delta)/ (\lambda+\delta/2) - 1 >0.$
It  follows that $J(\lambda+\delta)<\infty.$ \qed\\[.2cm]
{\it Proof of Theorem \ref{lower}.}\\
It is obviously enough to prove this result if $\lambda > 0.$ Moreover,  as in the lower bound part proof of Theorem \ref{dimension_d},  we can assume w.l.o.g. that
$$\int_0^1 \overline{\Pi} (b(t))dt < \infty$$
since otherwise the $\limsup$ is infinite and Theorem \ref{lower} is certainly true.\\
Then Theorem \ref{dimension_d} applies and it it is enough to show that
$$J(\alpha)=\infty, 0 < \alpha < (1-q)^{1/2}\lambda.$$
Given $0 < \alpha < (1-q)^{1/2}\lambda,$ let $\tau'$ be defined by $\alpha^2=\tau' \lambda^2$ and  take a $\tau$ satisfying 
$$0 < \tau' < \tau < (1-q).$$
Next choose a sequence $e^{-1}  > t_k \downarrow 0$  such that
$$V(b(t_k))\ge \frac{\lambda^2}{2}h^{-2}(1/t_k)(1-1/k), k \ge 1$$
and set
$$\tilde{t}_k = t_k\exp((\log 1/t_k)^\tau /2), k \ge 1.$$
A small calculation gives that 
$$\tilde{t}_k^{-1}f_{\tau}({\tilde{t}}_k^{-1}) \ge t_k^{-1}, k \ge 1.$$
Using the fact that
$$h(\tilde{t}_k^{-1}f_{\tau}(\tilde{t}_k^{-1}))/h(\tilde{t}_k^{-1}) \to 1 \mbox{ as }k \to \infty,$$
we can conclude that
$$h(1/t_k)/h(1/\tilde{t}_k) \to 1 \mbox{ as }k \to \infty.$$
By monotonicity of the functions $V$ and $b$ we obtain  for some $\delta_k \to 0$,
$$V(b(t)) \ge  \frac{\lambda^2}{2}h^{-2}(1/t)(1-\delta_k), t_k \le t \le \tilde{t}_k,$$
Consequently, we have
$$
J(\alpha) \ge \int_{t_k}^{\tilde{t}_k} t^{-1} \log(1/t)^{-\tau'/(1-\delta_k)} dt\ge  \log(\tilde{t}_k/t_k) (\log 1/t_k)^{-\tau'/(1-\delta_k)}
$$
which is by definition of $\tilde{t}_k,$
$$\ge  \log(1/t_k)^{\tau - \tau'/(1-\delta_k)}/2.$$
This sequence converges to infinity and it is now clear that $J(\alpha)=\infty,$ whenever $0 < \alpha < (1-q)^{1/2}.$ Theorem \ref{lower} has been proven. \qed\\[.2cm]
To prove Lemma \ref{extra-lemma} we need a further lemma.
\begin{lem} Assume that $\overline{V}(t):= \int_{|y| \le t} |y|^2 \Pi(dy) >0, t > 0.$ Let $g: [0,\infty[ \to ]0, \infty[$ be a  non-decreasing function such that  $\int_0^c g(t)^{-1} dt < \infty$ for all $ c >0.$ 
Then we have,
$$\int_{|y| \le 1} \frac{|y|^2}{g(\overline{V}(|y|))}\Pi(dy) < \infty.$$
 \end{lem}
 \begin{proof} We first  note that
 $$ \int_{|y| \le 1} \frac{|y|^2}{g(\overline{V}(|y|))}\Pi(dy)=\int_0^1  \frac{t^2}{g(\overline{V}(t))}Q(dt),
 $$
where $Q$ is the image measure $\Pi_f$ with $f:\R^d \to \R$ being the Euclidean norm.\\
Set $\overline{V}(1)=:K.$  Let further $R$ be the p-measure on the Borel subsets of $\R$ with $Q$-density  $K^{-1}t^2 I_{[0,1]}(t)$ and note that $F(t):=\overline{V}(t)/K$ is the distribution function of $R$. We then have
$$\int_0^1  \frac{t^2}{g(\overline{V}(t))}Q(dt) = K \int_0^1  \frac{1}{g(K F(t))} R(dt).$$
 Consider the generalized inverse function of $F$, that is
 $$\phi(u)=\inf\{t \ge 0: F(t) \ge u\}, 0 < u < 1.$$
 Then it is easy to see that $F(\phi(u)) \ge u, 0 < u < 1.$ Moreover, if $U: \Omega \to ]0,1[$ is a uniform$(0,1)$-distributed random variable on a p-space $(\Omega,\fc,\PP)$, the random variable $\phi \circ U$ has distribution function $F$.
  It follows that
  \begin{eqnarray*}
\int_0^1 g(K F(t))^{-1} R(dt) &=& \int g(KF(\phi(U)))^{-1}d\PP\\
&=&\int_0^1 g(KF(\phi(u)))^{-1}du \le \int_0^1 g(Ku)^{-1} du  < \infty
\end{eqnarray*}
 and the lemma has been proven. \end{proof}
 Recalling that $V(t) \le \overline{V}(t) \le d V(t)$, we can infer that if $V(t) > 0, t >0,$ we have for any $\delta > 0,$
\be \label{41}
 \int_{|y| \le 1} \frac{|y|^2}{V(|y|)(\log_{+}(1/V(|y|)))^{1+ \delta}}\Pi(dy) < \infty.
 \ee
 Here we set $\log_{+}(x) =1 \vee \log x, x > 0.$\\[.2cm]
\emph{Proof of Lemma \ref{extra-lemma}}\\
W.l.o.g. we assume that $V(t)> 0, t >0$ and that $h(x) \to \infty$ as $x\to \infty.$ Otherwise the lemma is trivial.\\
The function $t \mapsto \sqrt{t \log \log 1/t}/h(1/t)=b(t)$ is increasing and invertible on a subinterval $[0,t_0]$. We then clearly have for $0 \le x \le b(t_0),$
$$b^{\leftarrow}(x) =  x^2h^2(1/b^{\leftarrow}(x))/\log \log (1/b^{\leftarrow}(x)) .$$
Since  $V(b(t))= O( h^{-2}(1/t))$ as $t \to 0$ it follows that 
$$V(x) \le C_1 h^{-2}(1/b^{\leftarrow}(x)), 0 < x \le b(t_0),$$ where $C_1 \ge 1$ is a 
 positive constant. Combining this inequality with (\ref{xy}), we find that for small enough $x,$
\begin{eqnarray*}
V(x)(\log_{+} 1/V(x))^{1/\vartheta} &\le& C_1 h^{-2}(1/b^{\leftarrow}(x))(\log( h^2(1/b^{\leftarrow}(x)))^{1/\vartheta}\\
&\le& C_1 h^{-2}(1/b^{\leftarrow}(x)) ( 2(\log \log 1/b^{\leftarrow}(x))^{\vartheta})^{1/\vartheta}\\
&\le& C_2 h^{-2}(1/b^{\leftarrow}(x)) \log \log 1/b^{\leftarrow}(x),
\end{eqnarray*}
where $C_2$ is a positive constant.\
Thus we have for some $a_0 \le e^{-2},$
$$\int_{0 < |y| \le a_0} b^{\leftarrow}(|y|)\Pi(dy) \le  C_2\int_{0 <|y| \le a_0} \frac{|y|^2}{V(|y|)(\log_{+}(1/V(|y|)))^{1/\vartheta}}\Pi(dy),$$
where the second integral is finite by (\ref{41}). (Recall that $\vartheta < 1.$)\\[.2cm]
This implies that $\int_{0 < |y| \le 1}  b^{\leftarrow}(|y|) \Pi(dy) < \infty$ and our proof of Lemma \ref{extra-lemma} is complete. \qed

\section{Cluster sets}
\subsection{A general result}
Throughout this subsection $X_t, t\ge 0$ will be a (general) $d$-dimensional L\'evy process such that $X_t/b(t)$ is stochastically bounded as $t \to 0,$ that is,
\be \label{stoch_bounded}
 \forall \delta >0 \,\exists K_{\delta}> 0, 0 < t_{\delta} < 1:\, 
\PP\{|X_t| > K_{\delta} b(t)\} < \delta, 0 < t < t_{\delta}.
\ee
As in the previous sections $b: [0,1]\to [0,1]$ is a continuous and increasing function such that $b(1)=1$,  $b(t) /t \to \infty$ as $t \to 0$ and conditions (\ref{49}), (\ref{50}) are satisfied.
\begin{lem} \label{Kesten1} Let $0 < r < 1$ be a fixed number and let $x \in \R^d.$ The following are equivalent:
\begin{itemize}
\item[(a)] $x \in C(\{X_t/b(t): t \downarrow 0\})$ with probability one
\item[(b)] $ \sum_{n=1}^{\infty} \PP\{|X_t/b(t) -x| < \epsilon \mbox{ for some } r^{n+1} \le t < r^n\} =\infty$ for all $\epsilon > 0.$
\end{itemize}
\end{lem}
\begin{proof} \fbox{(a) $\Rightarrow$ (b)} This follows directly from the Borel-Cantelli lemma.\\[.1cm]
\fbox{(b) $\Rightarrow$ (a)} It is obviously enough to show that we have for any $\epsilon >0,$
$$\PP(\limsup_{n \ge 1} \{|X_t/b(t) - x| < \epsilon \mbox{ for some } t_{n+1} \le t < t_n\})=1,$$
where we set $ t_n = ar^n$ for some constant $a  >0$ which will be specified later on.
This is equivalent to proving
\be \label{uni}
\PP(\bigcup_{n=N}^{\infty} \{|X_t/b(t) - x| < \epsilon \mbox{ for some } t_{n+1} \le t < t_n\})=1, \forall N \ge 1.
\ee
Take an $m \ge 2$ and set
$$A_n:= \{|(X_t -X_{t_{n+m}})/b(t) - x| < \epsilon/2 \mbox{ for some } t_{n+1} \le t < t_n\}, n \ge 1$$
Then the probability of the union in (\ref{uni}) is
$$\ge \PP\left(\bigcup_{n=N}^{\infty} A_n \cap \{|X_{t_{n+m}}| < \epsilon b(t_{n+1})/2\}\right)$$
which is by the Feller-Chung lemma (see Lemma 3(i) on page 70 in \cite{CT})
$$\ge  \PP\left(\bigcup_{n=N}^{\infty} A_n\right) \inf_{n \ge N} \PP \{|X_{t_{n+m}}| < \epsilon b(t_{n+1})/2\}.$$
Observe that 
$$\PP(A_n)=\PP\{|X_{t -t_{n+m}}/b(t) - x| < \epsilon/2 \mbox{ for some } t_{n+1} \le t < t_n\}
$$
which is
\begin{eqnarray*}
&\ge&\PP\left\{\left|\frac{X_{t -t_{n+m}}}{b(t-{t_{n+m}})} - x\frac{b(t)}{b(t-t_{n+m})}\right| < \epsilon/2\mbox{ for some } t_{n+1} \le t < t_n\right\}\\
&\ge&\PP\left\{\left|\frac{X_{s}}{b(s)} - x\,\right| < \frac{\epsilon}{2} -|x|(c_{n,m} -1)
 \mbox{ for some }  t_{n+1}-t_{n+m} \le s < t_n-t_{n+m}\right\},
\end{eqnarray*}
where $c_{n,m}:= \sup_{t_{n+1} \le t \le{t_n}} b(t)/b(t-t_{n+m}).$\\
If $x \ne 0$  we set $\delta =\epsilon |x|^{-1}/8$. By relation (\ref{50}) we have for $t_{n+1} \le t \le t_n$  and large enough $n,$ 
$$b(t)/b(t-t_{n+m}) \le (1+\delta)t/(t-t_{n+m})\le (1+\delta)(1-r^{m-1})^{-1} \le 1+\epsilon|x|^{-1}/4, $$
provided that $m$ is bigger than some  $m_2$ (which depends on $x$ and $\epsilon$). \\ We can conclude that
$$\PP(A_n) \ge \PP\{|X_{s}/b(s) - x| < \epsilon/4 
 \mbox{ for some } t_{n+1}-t_{n+m} \le s < t_n-t_{n+m}\}.$$
 This also holds  if $x=0$ (in which case the last argument is unnecessary).\\
Setting $a=(1-r^{m-1})^{-1}$, we have $t_{n+1}-t_{n+m} =r^{n+1}$ and $t_{n}-t_{n+m} \ge r^n$
and we find that
$$\PP(A_n) \ge \PP\{|X(s)/b(s)-x| < \epsilon/4 \mbox{ for some }r^{n+1} \le s  < r^n\}$$ and consequently we have
$$\sum_{n=1}^{\infty} \PP(A_n) = \infty.$$
The events $A_n$ are defined so that  $A_i$ and $A_j$ are independent if $|i-j| \ge m.$ Choosing a $k \in \{1,\ldots, m-1\}$ such that $\sum_{j=1}^{\infty} \PP(A_{jm+k})=\infty$
 it easily follows from the Borel-Cantelli lemma for pairwise independent events that
$\PP(\limsup_{j \ge 1} A_{jm+k}) =1,$ which of course implies that
$$\PP(\bigcup_{n=N}^{\infty} A_n)=1, \forall\,N \ge1.$$
Finally, given $0 < \delta <1$, we can find for  $K_{\delta} >0$ as defined in (\ref{stoch_bounded})  a natural number $N_{\delta}$ such that 
$$\PP\{|X_{t_n}| \le K_{\delta} b(t_n)\} \ge (1-\delta), \forall n \ge N_{\delta}.$$
In view of (\ref{49}) we have  $b(t_{n+1}) /b(t_{n+m})\ge r^{-\rho (m-1)}$ for some $\rho > 1/3.$ If  $m$  is large enough the last term is $\ge 2K_{\delta}/\epsilon.$ We can conclude that
$$\PP\{|X_{t_{n+m}}| \le \epsilon b(t_{n+1})/2\} \ge 1-\delta, n \ge N_{\delta}.$$
It follows that for $N \ge N_{\delta}$ and then trivially for all $N \ge 1,$
$$
\PP(\bigcup_{n=N}^{\infty} \{|X_t/b(t) - x| < \epsilon \mbox{ for some } t_{n+1} \le t < t_n\}) \ge 1-\delta
$$
and the lemma has been proven. \end{proof}
We are now able to prove the following criterion for clustering of normalized L\'evy processes at zero which is analogous to a well known result of Kesten \cite{Ke} for random walks.
\begin{theo}\label{Kesten2} Let $\{X_t: t \ge 0\}$ be a $d$-dimensional L\'evy process satisfying condition (\ref{stoch_bounded}) and let the function $b(t), 0 \le t \le 1$ be as in Theorem \ref{dimension_d}. Given $x \in \R^d,$ the following are equivalent:
\begin{itemize}
\item [(a)] $ x \in C(\{X_t/b(t): t \downarrow 0\}) $ with probability 1
\item [(b)] $\int_0^1 t^{-1} \PP\{|X_t/b(t) -x |< \epsilon\}dt = \infty,\forall \epsilon > 0.$
\end{itemize}
\end{theo}
\begin{proof} \fbox{(a) $\Rightarrow$ (b)} From Lemma \ref{Kesten1} we know that (a) implies
for any $\epsilon >0$ and any $0 < r <1,$
$$\sum_{k=1}^{\infty} p(\epsilon, r, k) = \infty,$$
where $p(\epsilon, r, k):= \PP\{|X_t/b(t) -x| < \epsilon \mbox{ for some }  t \in I_k\}$ and $$I_k=[r^{k+1}, r^k[, k \ge 0.$$
Writing the integral in (b) as
$$\sum_{k=0}^{\infty} \int_{I_k} t^{-1} \PP\{|X_t/b(t) -x |< \epsilon\}dt$$
which is 
$$\ge \log(1/r) \sum_{k=0}^{\infty} \inf_{t \in {I_k}}  \PP\{|X_t/b(t) -x |< \epsilon\},$$
we see that it enough to show that given $0<\epsilon<1$,  we can find an $r_{\epsilon} \in ]0,1[$ such that for large $k$,
\be \label{kesten2a}
\PP\{|X_t/b(t) -x |< \epsilon\} \ge  p(\epsilon/4,r_{\epsilon},k+1)/2, \forall\,t \in I_k.
\ee
Consider the following stopping time
$$\tau_k := \inf\{s \ge r^{k+2}: |X_s/b(s)-x| < \epsilon/4\}.$$
Then clearly, $p(\epsilon/4,r,k+1)=\PP(\tau_k < r^{k+1}).$ Moreover, we have on the event $\{\tau_k < r^{k+1}\}:$
$$|X_{\tau_k}/b(\tau_k)| \le |x| + \epsilon/4.$$
We thus can conclude that we have on this event for any $t \in I_k,$
\begin{eqnarray*}
|X_t/b(t)-X_{\tau_k}/b(\tau_k)|&\le& |X_t- X_{\tau_k}|/b(t) + (|x|+\epsilon/4)(1-b(\tau_k)/b(t))\\
&\le&|X_t- X_{\tau_k}|/b(r^{k+1}) + \epsilon/4 + |x|(1-b(r^{k+2})/b(r^k)).
\end{eqnarray*}
If  $r=r_{\epsilon} \ge  (1-\epsilon/(4|x|+1))^{1/2} / (1-\epsilon/(8|x|+1))^{1/2}$
and we can infer from condition (\ref{50}) that there exists a  natural number $k_{\epsilon, x}$ such that  we have for $k \ge k_{\epsilon, x},$
$$b(r^{k+2})/b(r^k) \ge (1- \epsilon/(8|x|+1))r^2 \ge (1- \epsilon/(4|x|+1))$$
and it follows that
$$|X_t/b(t)-X_{\tau_k}/b(\tau_k)|\le |X_t- X_{\tau_k}|/b(r^{k+1}) + \epsilon/2, t \in I_k.$$
Next consider the stochastic  process 
$$X^*_s:= X_{\tau_k +s} -X_{\tau_k}, s \ge 0$$
which is defined on the event $\{\tau_k < r^{k+1}\}$. \\
Since $t- \tau_{k} \le r^{k}-r^{k+2}=:s_k$, we then have,
$$
\{\tau_k < r^{k+1}\} \cap \{\sup_{s \le s_k} |X^*_s| \le \epsilon b(r^{k+1})/4\} \subset \bigcap_{t \in I_k} 
\{|X_t/b(t)-x| < \epsilon\}
$$
By the strong Markov property of the L\'evy process $X_t, t \ge 0$ (see, for instance, Prop. 6 on p. 20 in \cite{Ber0}) the two left-hand events are conditionally independent on $\{\tau_k < \infty\}$  and the conditional distribution of $\{X^{*}_t: t \ge 0\}$ is equal to the $\PP$-distribution of $\{X_t: t \ge 0\}$.  We thus have,
\begin{eqnarray*}
\inf_{t \in I_k} \PP\{|X_t/b(t)-x| < \epsilon\} &\ge& \PP\{\tau_k < r^{k+1}\}\PP\{\sup_{s \le s_k} |X_s| \le \epsilon b(r^{k+1})/4\}\\
&=&p(\epsilon/4, r, k+1) \PP\{\sup_{s \le s_k} |X_s| \le \epsilon b(r^{k+1})/4\}
\end{eqnarray*}
We now need an upper bound for 
$ p_k:=\PP\{\sup_{s \le s_k} |X_s| > \epsilon b(r^{k+1})/4\}.$ By the Etemadi inequality (see, for instance, Theorem 22.5 in \cite{bil}) which also holds for L\'evy processes we have that
$$p_k \le 3 \PP\{|X_{s_k}| > \epsilon b(r^{k+1})/12\}$$
Note that $r^{k+1}/s_k= r/(1-r^2) \to \infty$ if $r \nearrow 1.$ So if $r$ is sufficiently large we have 
$$b(r^{k+1})/b(s_k) \ge r^{\rho}/(1-r^2)^{\rho} \ge 12K/\epsilon,$$ 
where in view of (\ref{stoch_bounded}) we can choose a $K >0$
so that for small enough $t,$
$$\PP\{|X_t| \ge K b(t)\}\le 1/6.$$
We can conclude that $p_k \le 1/2$ for large $k$ and we see that (\ref{kesten2a}) holds.\\[.2cm]
\fbox{(b) $\Rightarrow$ (a)} This follows since we have for any $0< r < 1,$
\begin{eqnarray*}
&&\int_0^1 t^{-1}\PP\{|X_t/b(t) -x |< \epsilon\}dt \le \sum_{k=0}^{\infty} \int_{r^{k+1}}^{r^k} t^{-1} \PP\{|X_t/b(t) -x |< \epsilon\}dt\\
&\le&\sum_{k=0}^{\infty}\log(1/r)\PP\{|X_t/b(t) -x |< \epsilon \mbox{ for some }r^{k+1} \le t < r^k\}
\end{eqnarray*}
whence condition (b) of Lemma \ref{Kesten1} is satisfied which implies that $x$ is in the cluster set. \end{proof}
\subsection{L\'evy processes without Gaussian part}
Under extra assumptions the last criterion for clustering can be further simplified as follows.
Let $A(b)$ be the symmetric non-negative definite matrix satisfying 
$$A^2(b)=\left(\int_{|y| \le b}  y_i y_j \Pi(dy)\right)_{1 \le i, j \le d}, 0 < b < 1.$$
\begin{lem}  \label{lemClu} Let $X_t, t \ge 0$ be a purely non-Gaussian L\'evy process and let $b(t), 0 \le t \le 1$ be a function as in Theorem \ref{dimension_d}.  Assume that condition (\ref{3}) is satisfied and that $\alpha_0 < \infty.$
 Then the following are equivalent:
\begin{itemize}
\item [(a)] $x \in C(\{X_t/b(t): t \downarrow 0\})$ with probability 1
\item [(b)] $\int_0^1 t^{-1} \PP\{|Y_t^{(b(t))}/b(t) - x| < \epsilon\} dt= \infty, \forall\,\epsilon >0.$
\item [(c)] $\int_0^1 t^{-1} \PP\{|\sqrt{t}A(b(t)) \eta/b(t) - x| < \epsilon\}dt= \infty, \forall\,\epsilon >0,$\\ where $\eta$ is $\mathcal{N}(0,I)$-distributed and $A(b)$ is defined as above.
\end{itemize}
\end{lem}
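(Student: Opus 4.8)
The plan is to prove Lemma~\ref{lemClu} by establishing the two equivalences $(a)\Leftrightarrow(b)$ and $(b)\Leftrightarrow(c)$ separately, using Theorem~\ref{Kesten2} as the bridge for the first one. Note that $\alpha_0<\infty$ forces $X_t/b(t)$ to be stochastically bounded as $t\downarrow 0$ (this follows from the already-proven upper bound in Theorem~\ref{dimension_d}), so condition~(\ref{stoch_bounded}) holds and Theorem~\ref{Kesten2} applies. Hence $(a)$ is equivalent to $\int_0^1 t^{-1}\PP\{|X_t/b(t)-x|<\epsilon\}\,dt=\infty$ for all $\epsilon>0$, and the task reduces to showing that in this integral $X_t$ may be replaced by $Y_t^{(b(t))}$.

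For $(a)\Leftrightarrow(b)$, recall the L\'evy--It\^o decomposition (\ref{Ito}): $X_t = t\nu(b(t)) + Y_t^{(b(t))} + Z_t^{(b(t))}$. I would argue that the contributions of the drift term $t\nu(b(t))$ and the big-jump term $Z_t^{(b(t))}$ are negligible after normalization by $b(t)$. For the drift, $|t\nu(b(t))|/b(t) = (t/b(t))\,|\nu(b(t))| \to 0$: indeed $t/b(t)\to 0$ by hypothesis, $|\gamma|$ is bounded, and $(t/b(t))\int_{b(t)<|y|\le 1}|y|\,\Pi(dy)\to 0$ by Lemma~\ref{cond}(c). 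For the big jumps, relation (\ref{trunc}) (for a fixed ratio $r$, then patched to all $t\downarrow 0$ along the intervals $[r^{n+1},r^n]$ as in the proof of Theorem~\ref{dimension_d}) shows $\sup_{0\le s\le r^n}|Z_s^{(b(r^n))}|=0$ eventually a.s.; equivalently, for the integral test, one shows $\int_0^1 t^{-1}\PP\{|Z_t^{(b(t))}|\ge \epsilon b(t)\}\,dt<\infty$ for every $\epsilon>0$ (this is essentially Lemma~\ref{cond}(b) together with $\overline{\Pi}(b(t))$ controlling $\PP\{Z_t^{(b(t))}\ne 0\}\le t\,\overline{\Pi}(b(t))$, since a summable bound on the probability of a jump larger than $b(t)$ kills the term). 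Combining, for any $\epsilon>0$ and small $t$, $\{|Y_t^{(b(t))}/b(t)-x|<\epsilon/2\}$ minus a set of summable probability is contained in $\{|X_t/b(t)-x|<\epsilon\}$, and symmetrically with the roles reversed; hence the two integrals $\int_0^1 t^{-1}\PP\{|X_t/b(t)-x|<\epsilon\}\,dt$ and $\int_0^1 t^{-1}\PP\{|Y_t^{(b(t))}/b(t)-x|<\epsilon\}\,dt$ diverge together, giving $(a)\Leftrightarrow(b)$.

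For $(b)\Leftrightarrow(c)$, the point is a Gaussian approximation of $Y_t^{(b(t))}$: the centred L\'evy process $Y_t^{(b)}$ has covariance matrix $t\,A^2(b)$, so $\sqrt{t}A(b)\eta$ is its Gaussian analogue. I would invoke a Berry--Esseen / normal-approximation estimate for $Y_t^{(b)}$ — concretely, Lemma~5 in \cite{E-3} in the same form already used in the proof of Lemma~\ref{lowerboundsum}, applied with $b=b(t)$ — to get for every $\delta>0$ and small $t$
$$\left|\PP\{|Y_t^{(b(t))}/b(t)-x|<\epsilon\} - \PP\{|\sqrt{t}A(b(t))\eta/b(t)-x|<\epsilon\}\right| \le C_\delta\, t\,\frac{1}{b^3(t)}\int_{0<|y|\le b(t)}|y|^3\,\Pi(dy) + (\text{a term from }\delta\text{-enlarging the ball}),$$
where the first error term is integrable against $t^{-1}\,dt$ by Lemma~\ref{cond}(a), and the second is handled by the usual sandwiching $\{|\sqrt t A(b(t))\eta/b(t)-x|<\epsilon(1-\delta)\}\subset\{|Y^{(b(t))}_t/b(t)-x|<\epsilon\}\subset\{|\sqrt t A(b(t))\eta/b(t)-x|<\epsilon(1+\delta)\}$ up to summable error, so that divergence of one integral (for \emph{all} $\epsilon$) forces divergence of the other. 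This yields $(b)\Leftrightarrow(c)$ and completes the proof.

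The main obstacle is the $(b)\Leftrightarrow(c)$ step: one must pass from the \emph{integral} (summability) statement to a pointwise-in-$t$ comparison of small-ball probabilities without losing control uniformly, and the Gaussian comparison must be good enough that the error, after dividing by $t$, is integrable near $0$ — this is exactly where Lemma~\ref{cond}(a) is essential and where the quantifier ``for all $\epsilon>0$'' has to be used to absorb the $\delta$-enlargement of the balls. The drift and big-jump estimates in $(a)\Leftrightarrow(b)$ are comparatively routine given Lemma~\ref{cond} and (\ref{trunc}).
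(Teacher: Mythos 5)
Your overall plan matches the paper's proof: $\alpha_0<\infty$ gives stochastic boundedness so Theorem \ref{Kesten2} converts (a) into an integral test; the decomposition (\ref{Ito}) with Lemma \ref{cond}(c) for the drift and the bound $\PP\{Z_t^{(b(t))}\ne 0\}\le t\,\overline{\Pi}(b(t))$ together with (\ref{3}) for the big jumps gives (a)$\Leftrightarrow$(b); and the Gaussian approximation with third-moment error integrable by Lemma \ref{cond}(a), absorbing the $\delta$-enlargement using the ``for all $\epsilon$'' quantifier, gives (b)$\Leftrightarrow$(c). One inaccuracy: for (b)$\Leftrightarrow$(c) you cite Lemma 5 of \cite{E-3}, but that is the lower bound for tail events $\PP\{|\sum\xi_j|\ge x\}$ used in Lemma \ref{lowerboundsum}; it is not formulated for the small-ball comparison $\PP\{|\sum\xi_j - x|<\epsilon\}$ versus $\PP\{|\sum\eta_j - x|<2\epsilon\}$ that is actually needed here. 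The paper instead discretizes $Y_t^{(b(t))}$ into an i.i.d.\ partial sum and invokes Lemma 13 of \cite{E-1} (a corollary of Theorem 2 in \cite{KK}), which is precisely a ball-probability Berry--Esseen estimate in that form; swapping in the correct reference is all that is needed, since you correctly identified both the structure of the bound and the role of Lemma \ref{cond}(a) in making the error term integrable against $t^{-1}\,dt$.
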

\begin{proof} We first prove  that (a) and (b) are equivalent.
Since we are assuming that $\alpha_0 < \infty$ it is clear that $X_t/b(t)$ is stochastically bounded as $t \downarrow 0$. Consequently, Theorem \ref{Kesten2} applies
so that (a) holds if and only if 
\be \label{52a}
\int_0^1 t^{-1} \PP\{|X_t/b(t) - x| < \epsilon\}dt= \infty, \forall \epsilon > 0
\ee
 Recall (see (\ref{Ito})) that
$X_t = t\nu(b(t)) + Y_t^{(b(t))} + Z_t^{(b(t))}, 0 < t <1,$
where we have  $t\nu(b(t))/b(t) \to 0$ as $t \to 0$ (by Lemma \ref{cond} (c) and since $t/b(t) \to 0$ as $t \to 0.$) Further noting that
$$\PP\{Z_t^{(b(t))} \ne 0\} = 1 - \exp(-t\overline{\Pi}(b(t)) \le t \overline{\Pi}(b(t)),$$
it follows that given $\epsilon >0$ there exist $0 < t_{\epsilon} <1$ such that
$$\PP\{|X_t - Y_t^{(b(t))}| \ge \epsilon b(t)\} \le t \overline{\Pi}(b(t)), 0 < t < t_{\epsilon}$$
which in turn implies by (\ref{3}) that 
$$\int_0^1 t^{-1} \PP\{|X_t - Y_t^{(b(t))}| \ge \epsilon b(t)\} dt < \infty, \epsilon >0.$$
It is easy now to see that  (\ref{52a})
holds  if and only if (b) holds.
Thus (a) and (b) are equivalent.\\[.2cm]
To see that (b) and (c) are equivalent it is enough to show that
 we have for $0 < t < 1$ and  $ \epsilon > 0$:
\be \label{eq5.2}
 \PP\{ |Y_t^{(b(t))}/b(t) -x| < \epsilon\} \le  \PP\{|\sqrt{t}A(b(t)) \eta/b(t) - x| < 2\epsilon\} + \Delta_1(t,\epsilon)\ee
and
\be \PP\{|\sqrt{t}A(b(t)) \eta/b(t) - x| < \epsilon\} \le \PP\{ |Y_t^{(b(t))}/b(t) -x| < 2\epsilon\} +\Delta_2(t,\epsilon)\ee
where $\int_0^1 t^{-1}\Delta_i(t,\epsilon) dt < \infty, i=1,2.$\\
We only prove the first inequality. The proof of the second one is then an obvious modification of the first proof.\\
Note that for any $n \ge 1,$
$$Y^{(b(t))}_t = \sum_{j=1}^n Y^{(b(t))}_{tj/n}- Y^{(b(t))}_{t (j-1)/n}=:\sum_{j=1}^n\xi_{j,t,n},$$
where the random vectors $\xi_{j,t,n}$ are i.i.d. with mean zero.\\ Moreover we have,
$\mathrm{Cov}(\xi_{j,t,n}) = \frac{t}{n} A^2(b(t))$. Since we also have $\E|\xi_{1,t,n}|^3 < \infty$
we can apply Lemma 13 in \cite{E-1} which actually is a corollary of Theorem 2 in \cite{KK}. \\
Setting $\eta_{j,t,n}:= \sqrt{t/n}A(b(t))\eta_j, 1 \le j \le n$, where $\eta_1, \ldots, \eta_n$ are independent $\mathcal{N}(0,I)$-distributed random vectors, we obtain for a   constant $C_{\epsilon}>0,$
\begin{eqnarray*}
&&\PP\{ |Y_t^{(b(t))}/b(t) -x| < \epsilon\}\\
&\le& \;\PP\{ |\sqrt{t/n}A(b(t))\sum_{j=1}^n \eta_j/b(t) - x| < 2\epsilon\} +C_{\epsilon}n \E|\xi_{1,t,n}|^3 /b(t)^3\\
&=&\PP\{ |\sqrt{t}A(b(t)) \eta_1/b(t) - x| < 2\epsilon\} +C_{\epsilon}t (n/t) \E|Y_{t/n}^{(b(t))}|^3 /b(t)^3.
\end{eqnarray*}
Recalling Lemma \ref{moment_3} and letting $n$ go to infinity, we finally find that
\begin{equation*}
\begin{split}
 \PP\{ |Y_t^{(b(t))}/b(t) -x| < \epsilon\}\le &\PP\{ |\sqrt{t}A(b(t)) \eta_1/b(t) - x| < 2\epsilon\} \\
 &\hspace{.9cm}+C_{\epsilon}t \int_0^{b(t)} |y|^3 \Pi(dy)/b(t)^3
\end{split}
\end{equation*}
which implies (\ref{eq5.2}) via Lemma \ref{cond}(c). \end{proof}
In the sequel the topological closure of a subset $B  \subset \R^d$ will always be denoted by $\mathrm{cl}(B)$.
\subsection{Proof of Theorem \ref{star}}
 It is trivial that $C(\{X_t/b(t): t \downarrow 0\})$ is closed since such cluster sets are always  closed. Note that the cluster set  $C(\{g(t): t \downarrow 0\})$ for a mapping $g: ]0,1] \to \R^d$ can be written as
$$\cap_{n \ge 1} \mathrm{cl}(\{g(s): 0 < s <1/n\}),$$
 which is closed as an intersection of closed sets in $\R^d$.\\ Next  since $\limsup_{t \downarrow 0} |X_t|/b(t)= \alpha_0 < \infty$ with probability one, we can conclude that the  cluster set $A$  has to be bounded and consequently it is compact. \\
 Moreover, there exists   an $\omega$ such that 
$\limsup_{t \downarrow 0} |X_t(\omega)|/b(t)= \alpha_0$ and at the same time $C(\{X_t(\omega)/b(t): t \downarrow 0\})=A$ as both properties hold with probability one. \\
Take a sequence $t_n = t_n(\omega) \downarrow 0$ such that $\lim_{n \to \infty} |X_{t_n}(\omega)|/b(t_n)| = \alpha_0$. Then the sequence $X_{t_n}(\omega)/b(t_n)$ is bounded and consequently there exists a subsequence $n_k=n_k(\omega) \to \infty$ such that $X_{t_{n_k}}(\omega)/b(t_{n_k})$ converges to a vector in $\R^d$ which has norm $\alpha_0$. This vector is of the form $\alpha_0 z$ with $|z|=1$ and it is in the cluster set $A.$ \\
Finally, it follows directly from Lemma \ref{lemClu}(c) that the set $A$ is symmetric about zero and also star-like at zero. Concerning the latter property we recall the following well known corollary of the classical T.W. Anderson inequality:\\  If $\eta: \Omega \to \R^d$ is a $d$-dimensional normal random vector we have for $x \in \R^d$ and $\delta >0,$
$$\PP\{ |\eta - x| < \delta\} \le \PP\{ |\eta - sx| < \delta\}, 0 \le s \le 1.$$
\subsection{Proof of Theorem \ref{star-}}
W.l.o.g we can  assume  that $\alpha_0=1$.  Our proof  is divided into three steps. In the first step we define a suitable discrete L\'evy measure $\Pi_0$ and we show that we have for any L\'evy process with characteristic triplet $(\gamma,0,\Pi_0)$  $\limsup_{t \downarrow 0} |X_t|/b(t) \le 1$ with probability 1.
In the second step we prove that the cluster set of $X_t/b(t)$ as $t\downarrow 0$ contains the set $A$. In the third step we finally show that this cluster set is also a subset of A.\\[.1cm]
{\bf Step 1} $A$ is symmetric and star-like w.r.t. zero and there is a unit vector $z_1$  in $A$.  (We are assuming $\alpha_0 =1.$) Then  $\mathcal{L}_1:=\{tz_1: |t| \le 1\}$ has to be in $A.$  Moreover, in view of  the separability of $\R^d,$ we  can write $A$ as the closure of $\mathcal{L}_1$ and at most countably many additional line segments, that is,
$$A =\mathrm{cl}(\bigcup_{j=1}^{\infty} \mathcal{L}_j),$$
where $\mathcal{L}_j = \{t z_j: |t| \le \sigma_j\},$ $z_j$ is a unit vector in $\R^d$ and $\sigma_j \in [0,1 ], j \ge 1.$ Note that $\sigma_1 =1.$ (If $A$ consists only of finitely many line segments we set $\sigma_j=0$ for large $j.$).   \\
We set $L_k:= \{\ell \in \{1,\ldots,k\}: \sigma_{\ell}^2 \ge 1/k\}, k \ge 1$ and
we denote the elements in $L_k$ by $1=j_1(k) < \ldots < j_{\ell_k}(k)$, where $\ell_k =\#L_k \ge 1$  for all $k \ge 1.$ Next we set
$$\sigma_{k, \ell} := \sigma_{j_{\ell}(k)} \mbox{ and } z_{k, l} := z_{j_{\ell}(k)}, 1 \le \ell \le \ell_k, k \ge 1.$$
Our measure $\Pi_0$ will have discrete support $\{y_{k,\ell}, 1 \le \ell \le \ell_k, k \ge k_0\} \subset D$, where again $D$ is the Euclidean unit ball and $k_0$ will be specified below. The support points are defined as follows,
$$y_{k,\ell}= b(1/a_{k,\ell})z_{k,\ell}, 1 \le \ell  \le \ell_k, k \ge k_0,$$
where $a_{k,\ell}:= h^{\leftarrow}(\exp(\exp(k^3) + \ell k)), 1 \le \ell \le \ell_k, a_{k,\ell_{k}+1}:=a_{k+1,1}, k \ge k_0$ and $k_0 \ge 2$ is  chosen  so that $a_{k_0,1} \ge e^2$.\\
Since $a_{k,\ell}, k \ge k_0, 1 \le \ell \le \ell_k$ is obviously increasing with respect to the lexicographical order on $\mathbb{N}^2$, it follows from the monotonicity of $b$ that $(k,\ell) \mapsto |y_{k,\ell}|=b(1/a_{k,\ell})$ is decreasing w.r.t to this order. \\[.1cm]
We define the discrete measure $\Pi_0$ on $D$ with  support as indicated above by setting
$$\Pi_0\{y_{k,\ell}\}= \frac{1}{b^2(1/a_{k,\ell})}\left[\frac{\sigma^2_{k,\ell}}{2h^2(a_{k,\ell})} - \frac{\sigma^2_{k,\ell+1}}{2h^2(a_{k,\ell+1})}\right], 1 \le \ell \le \ell_k, k \ge k_0,$$
where $\sigma_{k,\ell_{k}+1} :=\sigma_{k+1,1}.$\\
It then follows for any given pair $(k,\ell)$ with $k \ge k_0$ and $\ell \le \ell_k$ that
\begin{eqnarray}
\int_{|y| \le |y_{k,\ell}|} |y|^2 \Pi_0(dy) &=&  \sum_{(k',\ell') \ge (k,\ell)}|y_{k',\ell'}|^2 \Pi_0\{y_{k',\ell'}\} \nonumber\\
  &=&  \sum_{(k',\ell') \ge (k,\ell)} \left[\frac{\sigma^2_{k',\ell'}}{2h^2(a_{k',\ell'})} - \frac{\sigma^2_{k',\ell'+1}}{2h^2(a_{k',\ell'+1})}\right] =  \frac{\sigma^2_{k,\ell}}{2h^2(a_{k,\ell})} \label{5-1}
\end{eqnarray}
Applying (\ref{5-1}) with $|y_{{k_0},1}|=b(1/a_{{k_0},1})$, we see that 
$$\int_D |y|^2 \Pi_0(dy) < \infty.$$
Furthermore, we readily obtain from (\ref{5-1}) that
\be \label{5-1a}
V(b(t)) \le \int_{|y| \le b(t)}|y|^2 \Pi_0(dy) \le (2h^2 (1/t))^{-1}, 0 < t < e^{-2}.
\ee
Under the extra assumption (\ref{xy})  this already implies 
 via Theorem \ref{LIL} and Lemma \ref{extra-lemma} that with probability one,
\be \label{5-2}
\limsup_{t \downarrow 0} |X_t|/b(t) \le 1.
\ee
For the general case  we have to show directly that
$$
\int_D b^{\leftarrow}(|y|) \Pi_0(dy) < \infty.
$$
It is easy to see from the definitions of $\Pi_0$ and the function $b$ that this integral is
$$ \le \sum_{k=k_0}^{\infty}\sum_{\ell =1}^{\ell_k} \frac{1/a_{k,\ell}}{2b^2(1/a_{k,\ell})h^2(a_{k,\ell})}
=\sum_{k=k_0}^{\infty}\sum_{\ell =1}^{\ell_k}  (2 \log \log a_{k,\ell})^{-1}.$$
By the Karamata representation of the function $h$ (see, for instance, \cite{Ber0}, p.9), we have for some $c_0 > 1$ and $a_2 \ge a_1 \ge h(c_0),$
$$
\frac{a_2}{a_1} = \frac{h(h^{\leftarrow}(a_2))}{h(h^{\leftarrow}(a_1))}\le 2 \frac{h^{\leftarrow}(a_2)}{h^{\leftarrow}(a_1)}
$$
which implies that for large $k,$ $a_{k,\ell} \ge \exp(\exp(k^3)), 1 \le \ell \le \ell_k$. Recalling that $\ell_k \le k,$ we find that for some $k_1 \ge k_0,$
$$
\sum_{k=k_1}^{\infty}\sum_{\ell =1}^{\ell_k}  (2 \log \log a_{k,\ell})^{-1} \le \sum_{k=k_1}^{\infty} k^{-2} < \infty.$$
Thus (\ref{5-2}) holds in general.\\[.2cm]
{\bf Step 2} We  show that $C(\{X_t/b(t): t \downarrow 0\}) \supset A.$ \\
Since we already know that the cluster set is closed, symmetric about zero and star-like at zero, it is enough to show that we have for any fixed $j \ge 1,$
$$\sigma_j z_j \in C(\{X_t/b(t): t \downarrow 0\}) \mbox{ with probability one.}$$
It is  obviously sufficient to prove this for the $j$'s for which $\sigma_j >0.$\\ In view of Lemma \ref{lemClu} (which we can apply on account of (\ref{5-2})) this is equivalent to showing 
\be \label{5-3}
\int_0^1 t^{-1} \PP\{|\sqrt{t}\eta_t/b(t) -\sigma_j z_j| < \epsilon\}dt=\infty,\, 0 <\epsilon  < \sigma_j,
\ee
 where $\eta_t \sim \mathcal{N}(0,A^2(b(t))), 0 \le t \le 1$ and $A^2(b)$ is the $(d,d)$-matrix such that 
 $$A^2(b)_{i,j} = \int_{|y| \le b} y_i y_j \Pi_0(dy), 1 \le i, j \le d, 0 < b <1.$$
Given any vector $v \in \R^d,$ $\langle \eta_t, v \rangle$ is  a (1-dimensional) normal random variable  with mean zero and
\be \label{5-4}
\V(\langle \eta_t, v \rangle)= \langle v, A^2 (b(t)) v \rangle = \int_{|y| \le b(t)} \langle y, v \rangle^2 \Pi_0(dy), 0 < t <1.
\ee
If $k$ is large enough so that $\sigma_j^2 \ge 1/k$, we can find an index $r_k(j) \in \{1,\ldots,\ell_k\}$ for which $\sigma_{k,r_k(j)}=\sigma_j$ and $z_{k,r_k(j)}=z_j.$ From the definition of $\Pi_0$  and (\ref{5-4}) combined with the fact that $\sigma_{j'}^2 \le 1$ for all $j'$, we get  for $ t \ge 1/a_{k,r_k(j)}$
\begin{eqnarray}
\langle z_j, A^2 (b(t)) z_j  \rangle  &\ge& \sigma_j^2 [h^{-2}(a_{k,r_k(j)}) - k h^{-2}( a_{k,r_k(j) +1}) ]/2 \nonumber\\
&\ge& \sigma_j^2(1-ke^{-2k}) /(2h^2(a_{k,r_k(j)})) \label{5-5}
\end{eqnarray}
Next we define for a $\tau \in ]0,1[$ which be specified later on 
$$\tilde{a}_{k,r_k(j)}= a_{k,r_k(j)} \exp(-(\log a_{k,r_k(j)})^{\tau}/2) $$
and we set $I_k(j)=[1/a_{k,r_k(j)}, 1/\tilde{a}_{k,r_k(j)}]$.\\[.1cm]
Letting $t_k = 1/a_{k,r_k(j)}$ and $\tilde{t}_k= 1/\tilde{a}_{k,r_k(j)}$ in the proof of Theorem \ref{lower}, we can conclude that
\be \label{interval}
h(a_{k,r_k(j)})/h(\tilde{a}_{k,r_k(j)})\to 1 \mbox{ as }k \to \infty
\ee
As we also have $h(a_{k,r_k(j)})/h(a_{k,r_k(j)-1}) \to \infty$ as $k \to \infty$,  where we set
$a_{k,0}=a_{k-1,\ell_{k-1}}, k \ge 2,$ we see that  for
 sufficiently large $k,$ $\tilde{a}_{k,r_k(j)} > a_{k,r_k(j)-1}$ so that
$$ I_k(j) \subset [1/a_{k,r_k(j)}, 1/a_{k,r_k(j)-1}[.$$
If $w \in \R^d$ is a vector such that $\langle w,z_j \rangle =0$ and $t \in I_k(j),$
it  follows then that
\be \label{5-6}
\langle w, A^2 (b(t)) w  \rangle \le e^{-2k} (2h^2( a_{k,r_k(j)}))^{-1}.
\ee
Choosing an orthonormal basis  $\{w_{j,i}: 1 \le i \le d\}$  of $\R^d$ with $w_{j,1}=z_j$ and setting $\epsilon'=\epsilon/\sqrt{d}$, we can conclude that for any $t \in I_k(j)$
\begin{eqnarray*}
&&\PP\{|\sqrt{t}\eta_t/b(t) -\sigma_j z_j| < \epsilon\}\\ &\ge& \PP\left(\{ \sqrt{t}|\langle \eta_t, z_j \rangle/b(t) - \sigma_j | < \epsilon'\} \cap \bigcap_{i=2}^d \{\sqrt{t} |\langle \eta_t, w_{j,i} \rangle |/b(t)< \epsilon'\}\right) \nonumber\\
&\ge& \PP\{|\sqrt{t} \langle \eta_t, z_j \rangle/b(t) - \sigma_j | < \epsilon'\} -\sum_{i=2}^d 
\PP \{ |\langle \eta_t, w_{j,i} \rangle | \ge \epsilon'b(t)/\sqrt{t}\}. 
\end{eqnarray*}
As we have
\begin{eqnarray*}
\mathrm{Var}(\sqrt{t}\langle \eta_t, z_j\rangle/b(t))&= &t\langle z_j, A^2 (b(t)) z_j\rangle/b^2(t)\\
&\le& \int_{|y| \le b(t)}|y|^2 \Pi_0(dy)h^2(1/t)/\log \log 1/t
\end{eqnarray*}
 which goes to zero as $t \to 0$ (recall (\ref{5-1a})), one easily sees  that for large $k,$
\begin{eqnarray*}
\PP\{ |\sqrt{t}\langle \eta_t, z_j \rangle/b(t)- \sigma_j | < \epsilon'\} &=&
\PP\{ \langle \eta_t, z_j \rangle \ge  (\sigma_j - \epsilon')b(t)/\sqrt{t}\}\\
&&\hspace{.5cm} - \PP\{ \langle \eta_t, z_j \rangle \ge  (\sigma_j + \epsilon')b(t)/\sqrt{t}\}\\& \ge& \PP\{\langle \eta_t, z_j \rangle \ge  (\sigma_j - \epsilon')b(t)/\sqrt{t}\}/2.
\end{eqnarray*}
Let $\eta'$ be a standard normal random variable. Applying inequality (\ref{normal_lower}), we  obtain  from  (\ref{5-5}) and (\ref{interval})  that for $t \in I_k(j)$ and large $k,$
 \be \label{5-7}
 \begin{split}
\PP\{ \langle \eta_t, z_j \rangle \ge  (\sigma_j - \epsilon')b(t)/\sqrt{t}\}
&\ge \PP\{ \eta' \ge (1 -\epsilon')^{1/2}(2\log \log 1/t)^{1/2}\}  \\&\ge C (\log 1/t)^{-(1-\epsilon')}(\log \log 1/t)^{-1/2},
\end{split}
\ee
where $C > 0$ is a constant.\\
A similar argument using (\ref{5-6}) along with the bound 
$$\PP\{|\eta'| > t\}\le 2\exp(-t^2/2), t > 0$$
shows that for $t \in I_k(j)$ and large enough $k$,
\be \label{5-8}
\PP \{ |\langle \eta_t, w_{j,i} \rangle | \ge \epsilon'b(t)/\sqrt{t}\} \le (\log 1/t)^{-2}, 2 \le i \le d.
\ee
Combining relations  (\ref{5-7}) and (\ref{5-8}), we finally find that for $t \in I_k(j)$ and  large $k$
\be \label{5-9}
\PP\{|\sqrt{t}\eta_t/b(t) -\sigma_j z_j| < \epsilon\} \ge  C(\log 1/t)^{-(1-\epsilon')}(\log \log 1/t)^{-1/2}/4
\ee
which in turn implies that
\begin{eqnarray*}
&&\int_{I_k(j)}t^{-1}\PP\{|\sqrt{t}\eta_t/b(t) -\sigma_j z_j| < \epsilon\} dt\\
&\ge& C\log(a_{k,r_k(j)}/\tilde{a}_{k,r_k(j)}) (\log a_{k,r_k(j)})^{-(1-\epsilon')}(\log \log a_{k,r_k(j)})^{-1/2}/4\\
&\ge&C  (\log a_{k,r_k(j)})^{\tau} (\log a_{k,r_k(j)})^{-(1-\epsilon')}(\log \log a_{k,r_k(j)})^{-1/2}/4.
\end{eqnarray*}
Choosing $\tau > 1 -\epsilon'$, 
the last term converges to infinity  as $k$ goes to infinity. Thus (\ref{5-3}) holds which means that $\sigma_j z_j \in C(\{X_t/b(t): t \downarrow 0\}).$\\[.2cm]
{\bf Step 3}  We show that $x \not \in A$ implies  $x \not \in C(\{X_t/b(t): t \downarrow 0\}).$\\ Set $\epsilon := \mathrm{dist}(x, A)/2.$ This is a positive number since $A$ is closed.\\
In view of Lemma \ref{lemClu}  it is sufficient to prove for this choice of $\epsilon,$
\be \label{5-10}
\int_0^1 t^{-1} \PP\{|\sqrt{t}\eta_t /b(t) - x| < \epsilon\}dt < \infty,
\ee
where, as in Step 2, $\eta_t \sim \mathcal{N}(0,A^2(b(t))), 0 < t <1.$\\
Consider the intervals 
$$J_{k,\ell}:= [1/a_{k,\ell}, 1/a_{k,\ell-1}[, 1 \le \ell \le \ell_k, k \ge 2.
$$
Recall  that  $a_{k,0}=a_{k-1,\ell_{k-1}}, k \ge 2.$
Similarly as in (\ref{5-5}) and (\ref{5-6}), we then can conclude that for $t \in J_{k,\ell},$
\begin{eqnarray}
\langle z_{k,\ell}, A^2(b(t)) z_{k,\ell}\rangle \le (\sigma^2_{k,\ell}+e^{-2k})(2h^2(1/t))^{-1} \label{5-11}\\
\langle w, A^2(b(t)) w\rangle \le e^{-2k}(2h^2(1/t))^{-1} \mbox{ if }\langle w, z_{k,\ell}\rangle =0.
\label{5-12}
\end{eqnarray}
Furthermore, we have by definition of $\epsilon$ that 
\be \label{5-13}
\begin{split}
 \PP\{|\sqrt{t}\eta_t/b(t) - x| < \epsilon\} &\le \PP\{\mathrm{dist}(\sqrt{t}\eta_t/b(t), A) >\epsilon\}\\
 & \le \PP\{\mathrm{dist}(\sqrt{t}\eta_t/b(t), \mathcal{L}_{k,\ell}) >\epsilon\},
\end{split}
\ee
where $\mathcal{L}_{k,\ell}=\{t z_{k,\ell}: |t| \le\sigma_{k,\ell}\}, 1 \le \ell \le \ell_k, k \ge k_0.$\\
Let $\{w_{k,\ell, i}: 1 \le i \le d\}$ be an orthonormal basis of $\R^d$ with $w_{k,\ell, 1}=z_{k,\ell}.$  Writing 
$$\eta_t = \langle \eta_t, z_{k,\ell}\rangle z_{k,\ell} + \sum_{i=2}^d  \langle \eta_t, w_{k,\ell, i}\rangle w_{k,\ell, i},$$
it is easy to see that
\be
\begin{split}
 \PP\{\mathrm{dist}(\sqrt{t}\eta_t/b(t), \mathcal{L}_{k,\ell}) >\epsilon\} &\le \PP\{|\langle \eta_t, z_{k,\ell}\rangle | > (\sigma_{k,l}
  +\epsilon')b(t)/\sqrt{t}\}\\& \quad +  \sum_{i=2}^d \PP\{|\langle \eta_t, w_{k,\ell, i}\rangle| \ge \epsilon'b(t)/\sqrt{t}\},
\end{split}
\ee
 where, as in Step 2, $\epsilon'=\epsilon/\sqrt{d}.$\\
 Using  the same exponential inequality for the normal distribution as in (\ref{5-8}) along with relations (\ref{5-11}) and (\ref{5-12}), we can conclude that for large enough $k$ and $t \in J_{k,\ell}$,
 \be 
  \PP\{\mathrm{dist}(\eta_t, \mathcal{L}_{k,\ell}) >\epsilon\} \le 2d (\log 1/t)^{-1-\epsilon'}
  \ee
 This  implies via (\ref{5-13}) that for some $k_2 \ge k_0,$
 \begin{eqnarray*}
&&\int_0^{1/a_{k_2,0}} t^{-1} \PP\{|\sqrt{t}\eta_t/b(t) - x| < \epsilon\} dt\\ &=& \sum_{k=k_2}^{\infty} \sum_{\ell =1}^{ \ell_k} \int_{J_{k,\ell}}t^{-1} \PP\{|\sqrt{t}\eta_t/b(t) - x| < \epsilon\} dt\\
 &\le& 2d \int_0^{e^{-1}} t^{-1} \log(1/t)^{-1-\epsilon'}dt < \infty.
\end{eqnarray*}
Thus (\ref{5-10}) holds and Theorem \ref{star-} has been proven. \qed\\[.3cm]
{\bf Acknowledgement}. A large part of this work was done during two visits at the Australian National University in April 2017 and April 2018. The author would like to thank Professors B. Buchmann and R. Maller for making these visits possible and for very stimulating discussions about L\'evy processes.

\end{document}